\definecolor{darkblue}{rgb}{0,0,0.5}
\definecolor{darkgreen}{rgb}{0,0.5,0}
\numberwithin{equation}{section}
\def\hmath$#1${\texorpdfstring{{\rmfamily\textit{#1}}}{#1}}
\definecolor{green}{RGB}{34,139,34}
\newcommand{\OrdersToJBig}[1]{{\textsf{\upshape Orders\-To\-j\-Invariant\-Big\-Set}\ensuremath{}}}
\newcommand{\OrdersToJSmall}{{\textsf{\upshape Orders\-To\-jInvariant\-Small\-Set}}}
\newcommand{\OrderToJ}[1]{{\textsf{\upshape Single\-Order\-To\-j\-Invariant}\ensuremath{(#1)}}}
\newcommand{\OrderToJName}{{\textsf{\upshape Single\-Order\-To\-jInvariant}}}
\newcommand{\KerToId}[1]{\hyperref[alg: kernel to ideal]{\textsf{\upshape Kernel\-To\-Ideal}\ensuremath{_{#1}}}}
\newcommand{\IdToKer}[1]{\hyperref[alg: ideal to kernel]{\textsf{\upshape Ideal\-To\-Kernel}\ensuremath{_{#1}}}}
\newcommand{\ItIE}[1]{\hyperref[alg: new algo]{\textsf{\upshape Ideal\-To\-Isogeny\-From\-Eichler}\ensuremath{_{#1}}}}
\newcommand{\IdToIsoEich}[1]{\hyperref[alg: new algo]{\textsf{\upshape Ideal\-To\-Isogeny\-From\-Eichler}\ensuremath{_{#1}}}}
\newcommand{\IdToIsoKLPT}[1]{\hyperref[alg:IdealToIsogeny2bullet]{\textsf{\upshape Ideal\-To\-Isogeny\-From\-KLPT}\ensuremath{_{#1}}}}
\newcommand{\IdToIsoCoprime}[1]{\hyperref[alg:SpecialIdealToIsogeny]{\textsf{\upshape Ideal\-To\-Isogeny\-Coprime}\ensuremath{_{#1}}}}
\newcommand{\IdToIsoSmallKLPT}[1]{\hyperref[alg:IdealToIsogeny2eepsilon]{\textsf{\upshape Ideal\-To\-Isogeny\-Small\-From\-KLPT}\ensuremath{_{#1}}}}
\newcommand{\IdToIsoSmallEich}[1]{\hyperref[alg: new sub-algo]{\textsf{\upshape Ideal\-To\-Isogeny\-Small\-From\-Eichler}\ensuremath{_{#1}}}}
\newcommand{\SpEicherN}[1]{\hyperref[alg: special eichler norm eq]{\textsf{\upshape Special\-Eichler\-Norm}\ensuremath{_{#1}}}}
\newcommand{\EichlerN}[1]{\hyperref[alg: eichler norm eq]{\textsf{\upshape Eichler\-Norm}\ensuremath{_{#1}}}}
\newcommand{\IdEichlerN}[1]{\hyperref[alg: eichler ideal norm equation]{\textsf{\upshape Ideal\-Eichler\-Norm}\ensuremath{_{#1}}}}
\newcommand{\SubEichlerN}[1]{\hyperref[alg: extended eichler]{\textsf{\upshape Suborder\-Eichler\-Norm}\ensuremath{_{#1}}}}
\newcommand{\IdSubEichlerN}[1]{\hyperref[alg: ideal suborder norm eq]{\textsf{\upshape Ideal\-Suborder\-Eichler\-Norm}\ensuremath{_{#1}}}}
\newcommand{\FRI}[1]{\hyperref[alg: repres integer full]{\textsf{\upshape Full\-Represent\-Integer}\ensuremath{_{#1}}}}
\newcommand{\RI}[1]{\hyperref[alg: repres integer]{\textsf{\upshape Represent\-Integer}\ensuremath{_{#1}}}}
\newcommand{\FSA}[1]{\hyperref[alg: full strong approx]{\textsf{\upshape Full\-Strong\-Approximation}\ensuremath{_{#1}}}}
\newcommand{\SA}[1]{\hyperref[alg:strong approx]{\textsf{\upshape Strong\-Approximation}\ensuremath{_{#1}}}}
\newcommand{\KLPT}[1]{\hyperref[alg:KLPT]{\textsf{\upshape KLPT}\ensuremath{_{#1}}}}
\newcommand{\GenKLPT}[1]{\hyperref[alg: generic klpt]{\textsf{\upshape Generic\-KLPT}\ensuremath{_{#1}}}}
\newcommand{\SignKLPT}[1]{\hyperref[alg: sign klpt]{\textsf{\upshape Signing\-KLPT}\ensuremath{_{#1}}}}
\newcommand{\SmoothGen}[1]{\hyperref[alg: smooth basis]{\textsf{\upshape Generating\-Family}\ensuremath{_{#1}}}}
\newcommand{\Compress}[1]{\hyperref[alg:compression]{\textsf{\upshape Compression}\ensuremath{_{#1}}}}
\newcommand{\Decompress}[1]{\hyperref[alg:decompression]{\textsf{\upshape Decompression}\ensuremath{_{#1}}}}
\newcommand{\EndoEval}[1]{\hyperref[alg: endo eval]{\textsf{\upshape Endomorphism\-Evaluation}\ensuremath{_{#1}}}}
\newcommand{\IdealEvaluation}[1]{\hyperref[alg: ideal evaluation]{\textsf{\upshape Ideal\-Evaluation}\ensuremath{_{#1}}}}
\newcommand{\SuborderEvaluation}[1]{\hyperref[alg: suborder evaluation]{\textsf{\upshape Suborder\-Evaluation}\ensuremath{_{#1}}}}
\newcommand{\VerifIdeal}[1]{\hyperref[alg: verif ideal proof]{\textsf{\upshape Ideal\-Verification}\ensuremath{_{#1}}}}
\newcommand{\VerifSuborder}[1]{\hyperref[alg: verif suborder proof]{\textsf{\upshape Suborder\-Verification}\ensuremath{_{#1}}}}
\newcommand{\IdealToSuborder}[1]{\hyperref[alg: ideal to suborder]{\textsf{\upshape Ideal\-To\-Suborder}\ensuremath{_{#1}}}}
\newcommand{\CheckTrace}[1]{\hyperref[alg: check trace]{\textsf{\upshape Check\-Trace}\ensuremath{_{#1}}}}
\newcommand{\InverseTrapdoor}[1]{\hyperref[alg:inv]{\textsf{\upshape Inverse\-Trapdoor}\ensuremath{_{#1}}}}
\newcommand{\pSIDHKeyGen}[1]{\hyperref[alg: keygen]{\textsf{\upshape pSIDH\-Key\-Gen}\ensuremath{_{#1}}}}
\newcommand{\pSIDHKeyEx}[1]{\hyperref[alg: key exch]{\textsf{\upshape pSIDH\-Key\-Exchange}\ensuremath{_{#1}}}}
\newcommand{\SetaQuadOrder}[1]{\hyperref[alg: keygen param]{\textsf{\upshape Seta\-Quadratic\-Order\-Gen}\ensuremath{_{#1}}}}
\newcommand{\SetaCurveGen}[1]{\hyperref[alg:randomized trapdoor curve gen]{\textsf{\upshape Seta\-Curve\-Gen}\ensuremath{_{#1}}}}
\newcommand{\SupersingularEvaluation}[1]{\hyperref[alg: supersingular evaluation]{\textsf{\upshape Super\-singular\-Eval\-ua\-tion}\ensuremath{_{#1}}}}
\newcommand{\SpecialSupersingularEvaluation}[1]{\hyperref[alg: special supersingular evaluation]{\textsf{\upshape Special\-Super\-singular\-Eval\-ua\-tion}\ensuremath{_{#1}}}}
\newcommand{\ModularEvaluationBigLevel}[1]{\hyperref[alg: modular evaluation big level]{\textsf{\upshape Mo\-dular\-Eval\-ua\-tion\-Big\-Level}\ensuremath{_{#1}}}}
\newcommand{\ModularEvaluationBigChar}[1]{\hyperref[alg: modular evaluation big char]{\textsf{\upshape Mo\-dular\-Eval\-ua\-tion\-Big\-Char\-acteristic}\ensuremath{_{#1}}}}
\newcommand{\ModularEvaluation}[1]{\hyperref[alg: modular evaluation]{\textsf{\upshape Mo\-dular\-Eval\-ua\-tion}\ensuremath{_{#1}}}}
\newcommand{\ModularComputation}[1]{{\textsf{\upshape Mo\-dular\-Com\-puta\-tion}\ensuremath{_{#1}}}}
\newcommand{\GetWeberCube}{\textsf{\upshape GetWeberCube}}
\newcommand{\EndoRing}[1]{\hyperref[alg: endo ring]{\textsf{\upshape Endo\-morphism\-Ring}\ensuremath{_{#1}}}}
\newcommand{\SupersingularModularComputation}[1]{{\textsf{\upshape Super\-singular\-Modular\-Compu\-ta\-tion}\ensuremath{_{#1}}}}
\newcommand{\llog}{\textnormal{llog}}
\newcommand{\primesell}{\mathcal{P}_\ell}
\newcommand{\primesdelta}{\mathcal{P}_\Delta}
\newcommand*{\frakl}{\mathfrak{l}}
\newcommand{\fraka}{\mathfrak{a}} 
\newcommand{\frakS}{\mathfrak{S}} 
\newcommand{\frakb}{\mathfrak{b}}
\renewcommand{\O}{\mathcal{O}}
\newcommand{\QA}{\mathcal{B}_{p,\infty}}
\newcommand{\FF}{\mathbb{F}}
\newcommand{\ZZ}{\mathbb{Z}}
\newcommand{\QQ}{\mathbb{Q}}
\newcommand{\PP}{\mathbb{P}}
\newcommand{\CC}{\mathbb{C}}
\newcommand{\frakO}{\mathfrak{O}}
\newcommand{\frakf}{\mathfrak{f}}
\newcommand{\Cl}{\textnormal{Cl}}
\newcommand{\polylog}[1]{ O(\mathsf{poly} (\log (#1) ))}
\definecolor{light blue}{RGB}{0,102,204}
\renewcommand{\phi}{\varphi}
\DeclareFontFamily{OT1}{rsfs}{}
\DeclareFontShape{OT1}{rsfs}{n}{it}{<-> rsfs10}{}
\DeclareMathAlphabet{\mathscr}{OT1}{rsfs}{n}{it}
\newcommand{\fa}{\mathfrak{a}}
\newcommand{\comment}[1]{}
\newcommand{\Fpbar}{\overline{\FF}_p}
\newcommand{\Fp}{\FF_p}
\renewcommand{\O}{\mathcal{O}}
\DeclareMathOperator{\tr}{tr}
\DeclareMathOperator{\End}{End}
\newtheorem{claime}{Heuristic}
\crefname{figure}{Figure}{Figures}
\crefname{problem}{Problem}{Problems}
\crefname{proposition}{Proposition}{Propositions}
\crefname{algorithm}{Algorithm}{Algorithms}
\crefname{step}{Step}{Steps}
\crefname{lemma}{Lemma}{Lemmas}
\crefname{definition}{Definition}{Definitions}
\crefname{claime}{Heuristic}{Heuristics}
\crefname{assumption}{Assumption}{Assumptions}
\crefname{conjecture}{Conjecture}{Conjectures}
\newcounter{tasknumber}
\newcommand{\task}[2][]{%
  \addtocounter{tasknumber}{1}%
  \begin{center}%
  \framebox[1.1\width]{\begin{minipage}{0.9\textwidth}%
  \textbf{Task \arabic{tasknumber}} \textit{\if!#1(unassigned)!\else (#1)\fi}: {#2}%
  \end{minipage}}%
  \end{center}%
}
\pgfplotsset{compat=1.17}
\newcommand\Require{\REQUIRE}
\newcommand\Return{\RETURN}
\newcommand\Ensure{\ENSURE}
\renewcommand{\emph}[1]{\textit{#1}}
\newtheorem{theorem}{Theorem}[section]
\newtheorem{proposition}[theorem]{Proposition}
\theoremstyle{definition}
\theoremstyle{remark}
\newtheorem{remark}[theorem]{Remark}
\numberwithin{equation}{section}
\begin{document}

\title[Eval. of Modular Poly. from Supersingular Ell. Curves]{Evaluation of Modular Polynomials from Supersingular Elliptic Curves}


\author[M. C.-Real Santos]{Maria Corte-Real Santos}
\address{ENS de Lyon, CNRS, UMPA, UMR 5669, Lyon, France}
\email{maria.corte\_real\_santos@ens-lyon.fr}

\author[J. K. Eriksen]{Jonathan Komada Eriksen}
\address{COSIC, KU Leuven, Belgium}
\email{jeriksen@esat.kuleuven.be}

\author[A. Leroux]{Antonin Leroux}
\address{DGA-MI, Bruz, France 
 \\ IRMAR - UMR 6625, Université de Rennes, France}
\email{antonin.leroux@polytechnique.org}

\author[M. Meyer]{Michael Meyer}
\address{University of Regensburg, Germany}
\email{michael@random-oracles.org}

\author[L. Panny]{Lorenz Panny}
\address{Technische Universität München, Germany}
\email{lorenz@yx7.cc}
\subjclass[2020]{11Y40}

\begin{abstract}
  We present several new algorithms to evaluate modular polynomials of level $\ell$ modulo a prime $p$ on an input $j$. 
    More precisely, we introduce two new generic algorithms, sharing the following similarities: they are based on a CRT approach; they make use of supersingular curves and the Deuring correspondence; and, their memory requirements are optimal. 

    The first algorithm combines the ideas behind a hybrid algorithm of Sutherland in 2013 with a recent algorithm to compute modular polynomials using supersingular curves introduced in 2023 by Leroux. The complexity (holding around several plausible heuristic assumptions) of the resulting algorithm matches the $\Tilde{O}(\ell^3 \log^{3} \ell + \ell \log p)$ time complexity of the best known algorithm by Sutherland, but has an optimal memory requirement.    
    
    Our second algorithm is based on a sub-algorithm that can evaluate modular polynomials efficiently on supersingular $j$-invariants defined over $\Fp$, and achieves heuristic complexity quadratic in both $\ell$ and $\log j$, and linear in $\log p$. In particular, it is the first generic algorithm with optimal memory requirement to obtain a quadratic complexity in~$\ell$. 
    
    Additionally, we show how to adapt our method to the computation of other types of modular polynomials such as the one stemming from Weber's function.  
    
    Finally, we provide an optimised implementation of the two algorithms detailed in this paper, though we emphasise that various modules in our codebase
    may find applications outside their use in this paper.
\end{abstract}

\maketitle




\section{Introduction}

The evaluation of modular polynomials is one of the main subroutines involved in the SEA point counting algorithm~\cite{elkies1998elliptic}. The main application of point counting is to find elliptic curves suitable for cryptography. The asymptotic bottleneck of the computation relating to ``Elkies primes'' in the SEA algorithm is modular evaluation, and it is thus important to find improvements to the theoretical and practical efficiency of modular evaluation.

More generally, modular polynomials play an important role in the theory of elliptic curves, and so the computation and evaluation of modular polynomials is a central task in algorithmic number theory. Aside from point counting, modular polynomials are related to isogeny computations. While most applications tend to use the more efficient Vélu formulas~\cite{V71, BFLS20}, we still observe few instances where modular polynomials have been considered. For example, it is used in the CRS key exchange~\cite{couveignes,RS06}, the first isogeny-based protocol, the OSIDH construction~\cite{CK19}, the reduction introduced in~\cite{arpin2023finding}, or the fast implementation of Delfs--Galbraith proposed in \cite{corte2022accelerating}. 

In this work, we introduce several new algorithms for efficiently computing modular polynomials $\Phi_\ell$ of level $\ell$, evaluated in one of the variables.

\vspace{1ex}

\subsubsection*{Related work.} 
Almost all known methods to efficiently evaluate modular polynomials first requires their computation, which is the primary reason most literature on the topic focuses on the computation of modular polynomials. 

The historical approach to computing modular polynomials is based on the computation of the coefficients of the Fourier expansion of the modular $j$-function \cite{blake1999computation,elkies1998elliptic,morain1995calcul,lehmann1994counting}. As this computation works over the integers, it can be applied to compute $\Phi_\ell \bmod p$. 

An alternative approach based on the CRT method, which uses supersingular curves and $\ell$-isogeny computations, was introduced by Charles and Lauter \cite{charles2005computing} to work entirely over $\FF_p$.  

Enge \cite{enge2009complexity} uses interpolation and fast floating-point interpolation to obtain a quasi-linear algorithm over the integers under some heuristics. 

One of the main problems behind the computation of modular polynomials over the integers is the large size of their coefficients.  
In theory, the size of coefficients is less of an issue when dealing with $\Phi_{\ell}$ as the coefficients are reduced modulo $p$. However, this is only the case if we can avoid the computation over $\ZZ$ entirely, which is not easy to obtain in practice. 

Bröker, Lauter, and Sutherland (BLS hereafter) \cite{broker2012modular} obtained
the first quasi-linear complexity in both time and space with a careful application of the CRT method using ordinary curves. 

More recently, Leroux \cite{leroux2023computation} revisited the CRT approach from supersingular curves with new algorithmic results on the Deuring correspondence to obtain an algorithm with the same complexity as BLS. Leroux claims a better practical efficiency, but provided no experimental proof of this claim. 

Robert \cite{robert2022some} outlined another CRT method based on supersingular curves by using a $p$-adic approach in conjunction with the high-dimensional isogeny technique introduced in the context of the cryptanalysis of the SIDH key exchange.
This method was later refined and implemented by Kunzweiler and Robert \cite{kunzweiler2024computing}. However, the main advantage of their method is mostly theoretical as it does not require any assumptions (not even GRH!), but it is unlikely to be practical due to a non-optimal high memory requirement. Furthermore, the implementation they provide is in SageMath \cite{sage}, rather than a low-level implementation.     



In 2013, Sutherland \cite{sutherland2013evaluation} provided the first evidence that a method tailored to the evaluation could do better than a computation algorithm. Sutherland showed how to adapt the CRT approach from BLS to obtain two evaluation algorithms with excellent memory requirements. However, these algorithms have essentially the same complexity as the BLS computation algorithm \cite{broker2012modular}. 
 
\vspace{1ex}

\subsubsection*{Contributions.}
Our main contribution is the introduction of three new algorithms targeted at the task of evaluating modular polynomials. Our work builds upon the previous idea of Leroux \cite{leroux2023computation} to use supersingular curves with the Deuring correspondence for efficient modular polynomial computation. The fact that his algorithms could be improved to produce a more efficient evaluation algorithm was already mentioned by Leroux, but no precise method was described or even outlined. In this work, we show that there are several interesting approaches to consider in this setting.    
For a fixed value of $\ell$, each of our new algorithms will achieve the best known complexity for some range of primes $p$ (relative to the size of $\ell$). Hereafter, we assume that the evaluation is done on an input $j \in \FF_p$ that can be seen as an integer $0 \leq j \leq p-1$. All the stated complexities hold under several plausible heuristic assumptions. 

\begin{enumerate}
    \item \ModularEvaluationBigChar{} is a generic CRT evaluation algorithm built on top of the \OrdersToJBig{} algorithm from \cite{leroux2023computation} which combines \cite[Algorithm 2.2]{sutherland2013evaluation} and \ModularComputation{} from \cite{leroux2023computation}. The complexity is $ O\left(\ell^2 (\ell \log \ell + \log p) \log^{2+\varepsilon} (\ell \log \ell +\log p)\right)$ and the memory requirement is  $O\left( (\ell \log \ell + \log p) \log (\ell \log \ell + \log p) + \ell \log p\right)$. The asymptotic complexity is the same as \cite[Algorithm 1]{sutherland2013evaluation}, but the space requirement is better. This algorithm will achieve the best known space/time complexity in cases where $p$ (i.e., the characteristic) is large. 
    \item \SupersingularEvaluation{} works on supersingular $j$-invariants and has a complexity of $O\left(\ell (\log p^{4 + \varepsilon} + \log \ell^{2 + \varepsilon} \log^{1+ \varepsilon} p) + p^{1/4} \log^{3 + \varepsilon} p \right)$, using $O(\ell \log p \allowbreak + p^{1/4} \log^{1+\varepsilon} p)$ space. Despite a limited range of applications (due to the supersingularity constraint), this algorithm achieves the best known complexity when the prime $p$ is small, and it is the only algorithm that is linear in $\ell$. 
    \item \ModularEvaluationBigLevel{} is a generic CRT evaluation algorithm built on top of \SupersingularEvaluation{}. It has two main steps: a CRT prime collection of complexity $O\left(\ell^2 \log^{2} j  (\log^{2+\varepsilon} (\ell \log j))\right)$, and a CRT computation of complexity $O ( \ell^2 \log j \log^{3+\varepsilon} \ell + \ell^{3/2} \log^{3/2} j \log \ell^{3+\varepsilon} + \log^{2 +\varepsilon} \ell \log^{1+ \varepsilon} p )$. The global space requirement is $O(\ell \log (pj))$. This algorithm achieves the best known complexity of all generic evaluation algorithm if either one of $j$ or $p$ is small (with respect to $\ell$). In the generic case, we will have $j = \Theta(p)$, and the complexity is quadratic in $\log p$, which is worse than the other algorithms. However, note that the quadratic component of the complexity comes from the CRT prime collection, and we note that this step only depends on the input $j$ and can be shared among evaluations of different modular polynomials on the same $j$. This could prove interesting for applications such as point counting. 
\end{enumerate}

Outside of these algorithmic contributions, we also show how to adapt our methods to work for other modular polynomials, such as the one based on the Weber function, by exploiting a modular parametrization of elliptic curves with order $48$ level structure underlying the Weber function. This is very convenient in practice as the sizes of the coefficients of modular polynomials associated to the Weber function are smaller by a large constant, which makes them approximately 1728 times faster to compute than standard modular polynomials of the same level.  

Finally, in \cref{sec: implementation}, we provide an efficient implementation of all our algorithms written in C++ and NTL \cite{ntl}. 
Unfortunately, at their current state, the implementations of our two generic algorithms do not seem to outperform the state-of-the-art implementation from \cite{sutherland2013evaluation} for parameters that are within reasonable computational reach. A more detailed analysis of the concrete performance of our implementation can be found in \cref{sec: implementation results}. 
We emphasise, however, that many of the algorithms we implement are useful outside their application in this paper. We highlight in 
particular the following: 
\begin{enumerate}
    \item An optimised implementation of polynomial interpolation, which outperforms NTL's in-built function for polynomials of large degree.
    \item An implementation of the Deuring correspondence for generic primes, thus providing a low-level optimised implementation 
    of algorithms from \cite{eriksen2023deuring}, which outperforms that implementation by a large constant factor.
    \item An optimised implementation of the computation of the endomorphism ring of a supersingular elliptic curve defined over $\FF_p$ for generic primes. 
\end{enumerate}
The last two routines are especially useful for the cryptanalysis and construction of isogeny-based cryptography.

\vspace{2ex}

\subsubsection*{Acknowledgements.} We thank Drew Sutherland and Sam Frengley for helping us finding the maps that are needed to compute the modular polynomials associated to the Weber function. 

This work was supported in part by the European Research Council (ERC) under the European Union’s
Horizon 2020 research and innovation programme (grant agreement ISOCRYPT -- No.\ 101020788), 
by the Research Council KU Leuven grant C14/24/099, and by CyberSecurity Research Flanders with reference 
number VR20192203. This work was also supported by the European
Research Council under grant No.\ 101116169 (AGATHA CRYPTY)
and by the Deutsche Forschungsgemeinschaft (DFG, German Research Foundation)
under SFB 1119 -- 236615297.

\subsection{Technical overview}
\label{sec: technical overview}

Let $p$, $\ell$ be two distinct primes, and let $j \in \FF_p$. Hereafter, we abuse notation and also write $j$ for the unique integer between $0$ and $p-1$ in the class of $j$ modulo $p$. The goal of all the algorithms introduced in this work is to compute the polynomial $\Phi_\ell(X,j) \in \FF_p[X]$. 

Our work can be seen as an extension of the ideas introduced by Leroux in \cite{leroux2023computation} to the setting of modular polynomial evaluation, using some ideas outlined by Sutherland in \cite{sutherland2013evaluation}. Note that the complexities of our algorithms rely on heuristic assumptions because the complexity of Leroux's algorithm are only proven assuming various heuristics. 

All our algorithms rely on the Deuring correspondence and the \OrdersToJSmall{} and \OrdersToJBig{} algorithms from \cite{leroux2023computation} whose goal is to compute the set of $j$-invariants corresponding (under the Deuring correspondence) to a set of maximal order types given as input. \OrdersToJSmall{} is generally more efficient, unless the set of $j$-invariants to be computed is somewhat close to the entire set of supersingular $j$-invariants, where \OrdersToJBig{} is tailored to be more efficient.
These two algorithms were used by Leroux to build two new efficient algorithms to compute modular polynomials modulo $p$:
\begin{enumerate}
    \item \SupersingularModularComputation{} directly applies \OrdersToJSmall{} (or \OrdersToJBig{}, if it is faster) to compute the necessary $j$-invariants to interpolate $\Phi_\ell(X,Y)$ over $\FF_p$. 
    \item \ModularComputation{} is a CRT algorithm that applies \SupersingularModularComputation{} on a set of small CRT primes $p_i$ before reconstructing the result mod $p$. Here, \SupersingularModularComputation{} is always used with \OrdersToJBig{} because the $p_i$ are small compared to $\ell$.  
\end{enumerate}
The approach by Leroux is amenable to ideas introduced by Sutherland \cite{sutherland2013evaluation} to adapt the BLS modular polynomial computation algorithm to the evaluation setting. 
Sutherland observed that there are essentially two ways to evaluate $\Phi_\ell$ on $j \in \FF_p$.
The first and most direct way is to consider $j$ as an integer, compute $\Phi_\ell(X,j)$ over the integers and finally reduce the result modulo $p$. This leads to \cite[Algorithm 2]{sutherland2013evaluation} which has optimal space complexity, but can be quite inefficient when $\log p$ grows and $j = \Theta(p)$. The main problem with that method is that the powers of $j$ are much bigger than $j$ over $\ZZ$. This increases the size of the coefficients of $\Phi_\ell(X,j) \in \ZZ[X]$ which has a negative impacts on the performances of this approach. 

Sutherland's trick is to realize that we can avoid exponentiating over $\ZZ$ by exponentiating over $\FF_p$ where the powers of $j$ are all in $O(p)$. Thus, he proposes to lift each element $j^k \in \FF_p$ to an integer $\widehat{j_k}$ for $1\leq k \leq \ell+1$, see $\Phi_\ell(X,j)$ as a multivariate polynomial $P_\ell(X,j_1,\dots,j_{\ell+1})$ by replacing each $j^k$ by a variable $j_k$ and evaluate this polynomial on the $\widehat{j_k}$ before reducing the result modulo $p$.
This gives \cite[Algorithm~1]{sutherland2013evaluation}, which obtains the best known complexity but requires more space than the more naive approach. This method has the same complexity as computing the whole modular polynomial, but requires less memory as the full polynomial is never stored in memory. 

Sutherland also presents a ``hybrid'' version of \cite[Algorithm~1]{sutherland2013evaluation} (see \cite[Section 3.4]{sutherland2013evaluation}). This hybrid algorithm optimises the amount of data stored throughout the computation, and so it is slower but uses an optimal amount of space. 
  
Our first algorithm, \ModularEvaluationBigChar{} can be seen as an adaptation of Leroux's \ModularComputation{} with the ideas behind Sutherland's hybrid algorithm. Similarly to Sutherland's idea, we lift powers of $j$ to integers $\widehat{j_1},\ldots,\widehat{j}_{\ell+1}$ before evaluating the polynomial $P_\ell(X,\widehat{j}_1,\dots,\widehat{j}_{\ell+1})$ with a CRT method. The computation of $P_\ell(X,\widehat{j}_1,\dots,\widehat{j}_{\ell+1})$ modulo the small CRT primes $p_i$ mainly relies on \OrdersToJBig{} in a manner similar to \SupersingularModularComputation{}. It turns out that this method requires less storage than the version using ordinary curves, and so we can execute the analogue of Sutherland's hybrid version without any negative impact on the complexity. 

Thus, just as \ModularComputation{} matches the complexity of BLS, \ModularEvaluationBigChar{} essentially matches the complexity of \cite[Algorithm~1]{sutherland2013evaluation} taking $ O\left(\ell^2 (\ell \log \ell + \log p) \log^{2+\varepsilon} (\ell \log \ell +\log p)\right)$, but with a better space complexity of $O\left( (\ell \log \ell + \log p) \log (\ell \log \ell + \log p) + \ell \log p\right)$ that is quasi-linear in $\ell$. 

Our second generic evaluation algorithm \ModularEvaluationBigLevel{} applies the naive approach of computing $\Phi_\ell(X,j)$ over $\ZZ[X]$, but compensates the efficiency loss caused by the large size of the powers of $j$ in $\ZZ$ by noticing that modulo some well-chosen CRT primes $p_i$, the computation of $\Phi_\ell(X,j) \bmod p_i$ can be made much more efficient than the computation of $P_\ell(X,\widehat{j}_1,\dots,\widehat{j}_{\ell+1}) \bmod p_i$ (which is essentially equivalent to the computation of the full $\Phi_\ell(X,Y) \bmod p_i$). The well-chosen primes are those where $j \bmod p_i$ is the $j$-invariant of a supersingular curve and this efficient algorithm is our third algorithmic contribution: \SupersingularEvaluation{}, whose complexity is linear in $\ell$. The main downside of this idea is that the probability that $j \bmod p_i$ is supersingular is in $O(1/\sqrt{p_i})$ which makes the computation of the CRT primes quite costly with a complexity of $O\left(\ell^2 \log^2 j \log^{2+\varepsilon} (\ell \log j)\right)$. In the generic case where $\log j = \theta(\log p)$, this step will be the bottleneck.  

Once the CRT primes have been computed, the rest of the computation takes $O\left( \ell^2 \log j \log \ell^{3+ \varepsilon} + \ell^{3/2} \log^{3/2} j \log^{2+\varepsilon} \ell  + \log^{2 +\varepsilon} \log^{1+ \varepsilon} p
\right)$, and the overall space requirement is $O(\ell \log (pj))$ which is optimal given the size of the output.  

\SupersingularEvaluation{} consists of a rather straightforward application of \OrdersToJSmall{} to compute the $j$-invariants $\ell$-isogenous to the supersingular $j$\nobreakdash-invariant given as input. This gives a linear complexity in $\ell$, but comes at the cost of computing the endomorphism ring of the input curve, which takes $O(p^{1/4} \log p^{3 + \varepsilon})$. The final complexity is $$O\left(\ell( \log \ell^{2 + \varepsilon} \log p^{1 + \varepsilon} + \log p^{4 + \varepsilon}) + p^{1/4} \log p^{3 + \varepsilon}\right).$$ 

We note that \SupersingularEvaluation{} is interesting in its own right, as it achieves the best known complexity to evaluate $\Phi_\ell$ when the prime $p$ is small. However, the constraint of having a supersingular $j$-invariant as input is quite limiting for practical applications.

\vspace{1ex}

\subsubsection*{Other modular functions.} We also show how to adapt our method to compute modular polynomials associated to other modular functions such as the Weber function. It was already suggested by Leroux in \cite{leroux2023computation} that, similarly to the BLS algorithm, the supersingular approach could be adapted to work for other modular functions. However, unlike the prediction by Leroux, who qualified the task as ``not too daunting'', concretely implementing this comes with a few technical obstacles. In particular, the approach taken in BLS cannot be made to work here. Our solution uses a level structure parametrization of Weber invariants and elimination theory to solve the issue. 

\vspace{1ex}

\subsubsection*{Organisation of the article.} The rest of this paper is organised as follows: in \cref{sec: background}, we introduce some background on isogenies, quaternion algebras and the Deuring correspondence. The main technical contributions are introduced in \cref{sec: modular poly evaluation} where we present our new algorithms. Finally, in \cref{sec: implementation}, we give some details on our implementation.

\section{Background material}
\label{sec: background}

Throughout the paper, we consider logarithm in base $2$ that we simply write $\log(\cdot)$, and we use $\llog (\cdot)$ for $\log (\log (\cdot))$. 

\medskip





Background on the main topics covered in this article can be found in:
\begin{enumerate}
  \item Silverman \cite{S09} for elliptic curves and isogenies; 
  \item Voight \cite{voight} for quaternion algebras
  \item the thesis of Leroux \cite{leroux2022quaternion} for the algorithmic aspects of the Deuring correspondence.  
\end{enumerate}

In short, the Deuring corrrespondence makes a link between elliptic curves and isogenies over $\FF_p$ on one side, and orders and ideals of the quaternion algebra ramified at $p$ and $\infty$ on the other side. This result thus provides a way to navigate efficiently the supersingular isogeny graph by using some simple operations over some lattices of dimension $4$. This is what we call the effective Deuring correspondence. 


\textbf{Effective Deuring correspondence.} More precisely, one important algorithm for us is a heuristic polynomial-time algorithm introduced in \cite{EHLMP18} which computes the $j$-invariant corresponding to a maximal order type given as input. It makes use of a smooth isogeny connecting the desired $j$-invariant to a special $j$-invariant (for instance $j=1728$ or $j=0)$. 
This smooth isogeny is found using the Deuring correspondence with the KLPT algorithm \cite{KLPT14}. The smooth isogeny found with KLPT is usually quite large (the heuristic estimates suggest that one should be able to find an isogeny of degree $\tilde{O}(p^3)$ with that algorithm), and so the concrete computation of this isogeny, for a generic prime $p$, can be quite slow. The asymptotic cost of this computation is $O(\log^{4 + \varepsilon} p)$, but the constants hidden in that complexity are dauntingly large for generic primes. Nevertheless, in \cite{eriksen2023deuring} it was shown that this algorithm can be made practical, even for generic primes, by applying several practical improvements to the original algorithm from \cite{EHLMP18} Henceforth, we call this algorithm \OrderToJ{}.


\begin{remark}
  There seems to be a typo in \cite{leroux2023computation} where the cost of this algorithm is estimated to be $O(\log p^{5+\varepsilon})$. However, if we look at \cite[Lemma 4]{GPS17} we see that the cost should be of $O(\log^{4 + \varepsilon} p)$ (after a precomputation of $\log^{6 + \varepsilon} p$ that can be amortized across all isogeny computations). 
\end{remark}


\textbf{Efficient Deuring correspondence for several orders. }
In \cite{leroux2023computation}, Leroux looked at the problem of the effective Deuring correspondence for a set of order types. For a small number of orders (relative to the prime characteristic $p$), the best method seems to be simply to apply \OrderToJ{} to each order in the set. Leroux called this algorithm \OrdersToJSmall, and we will keep that notation throughout the paper. If $S$ is the size of the set of orders to be computed, the complexity of this algorithm is $O(S \log^{4 + \varepsilon} \log p)$. 

However, when the set contains a lot of order types, Leroux introduced a much faster algorithm called \OrdersToJBig{} (the algorithm really becomes interesting for sets of size $O(p)$). The idea of this algorithm is to go through the entire set of supersingular curves through quaternions and collect the desired $j$-invariants along the way in order to minimize the number of isogeny computations required. 

The asymptotic complexity of this algorithm is $O(S \log p^{2 + \varepsilon} + p \log p^{1+\varepsilon})$, and it also has better hidden constants than \OrdersToJSmall{} due to the fact that it involves the computation of isogenies of degree $O(p)$ instead of $O(p^3)$.

\smallskip

\textbf{Application of the Deuring correspondence to the computation of modular polynomials.} The principle that ``quaternion operations" are generally more efficient than ``elliptic curve operations" is at the heart of the recent algorithm \ModularComputation{} of Leroux \cite{leroux2023computation} for computing modular polynomials. Indeed, the main part of this computation is to collect pairs of $\ell$-isogenous $j$-invariants. By working with supersingular $j$-invariants, Leroux showed how to exploit the effective Deuring correspondence to realize that collection with a minimal amount of ``elliptic curve" operations by working mostly with quaternions. 
The idea is that the set of maximal order types corresponding to the $j$-invariants to be collected can be generated purely from quaternion operations. Then, it suffices to apply the algorithms we mention above to collect all the necessary $j$-invariants.

\section{Evaluation of modular polynomials}
\label{sec: modular poly evaluation}

In this section, we introduce our main theoretical contributions, namely the three new algorithms \ModularEvaluationBigChar{}, \SupersingularEvaluation{}, and \ModularEvaluationBigLevel{} (respectively introduced in \cref{sec: eval big char,sec: supersingular eval,sec: eval big level}). In \cref{sec: comparison}, we provide a concrete comparison with several other algorithms from the literature. Finally, in \cref{sec: other modular}, we explain how to adapt our method to evaluate modular polynomials associated with the Weber invariant. 

\subsection{A first CRT approach for big characteristic}
\label{sec: eval big char}

Our first CRT approach is inspired by the ``hybrid'' algorithm described by Sutherland but where \cite[Algorithm 2.2]{sutherland2013evaluation} is replaced with an algorithm \SpecialSupersingularEvaluation{} described below.
This algorithm combines the ideas from \SupersingularModularComputation{} from \cite{leroux2023computation}, with the trick introduced by Sutherland to perform the evaluation with a minimal space requirement. The part of our algorithm that constructs the final output by CRT is identical to the one in \cite[Algorithm 2 ]{sutherland2013evaluation}. We therefore do not describe these steps precisely, and instead refer the reader to \cite[Section 6]{sutherland2011computing} for more details. 

\begin{algorithm}[ht]
    \caption{$\SpecialSupersingularEvaluation{}(p,\ell,x_0,\ldots,x_\ell)$}\label{alg: special supersingular evaluation}
    \begin{algorithmic}[1]

    \Require {A prime $p$, a prime $\ell$ with $\lceil p/12 \rceil +1 < \ell$ and $\ell + 1$ values $x_0,\ldots,x_\ell \in \FF_p$. }
    \Ensure{ $P(Y) = \sum_{i,j} a_{i,j} x_i Y^j $ where $\Phi_\ell(X,Y) = \sum_{i,j} a_{i,j} X^i Y^j$.}
    \STATE \label{step: find all types} Compute the set of maximal order types $\O_{1},\ldots,\O_{m} \subset \QA$ and set $\frakS$ as the list of these maximal order types
    \STATE Compute $J = \OrdersToJBig{}(p, \frakS)$
    \STATE Select a set $\O_{1,0},\ldots,\O_{\ell+2,0}$ in $\frakS$
    \FOR{ $i=1$ to $\ell+2$}
        \STATE Find $j_{i,0}$ as the $j$-invariant in $J$ corresponding to $\O_{i,0}$
        \STATE Compute $I_{i,1},\ldots,I_{i,\ell+1}$, the $\ell+1$ $\O_{i,0}$-ideals of norm $\ell$
        \FOR{$k=1$ to $\ell+1$}
            \STATE Select $j_{i,k}$ as the $j$-invariant in $J$ corresponding to $\O_R(I_{i,k})$
        \ENDFOR
        \STATE $P(j_{i,0},Y) \leftarrow \prod_{k=1}^{\ell+1} (Y-j_{i,k})$
        \STATE Write $P(j_{i,0},X) = \sum_{k=1}^{\ell+1} c_{i,k-1} X^{k-1}$
        \STATE $y_i \leftarrow \sum_{k=1}^{\ell+1} c_{i,k-1} x_{k-1}$
    \ENDFOR

    \STATE Interpolate $P(Y) \bmod p$ as the polynomial of degree $\ell+1$ with $ P(j_{i,0}) =y_i$.
    \RETURN $P(Y)$.  
    \end{algorithmic}
   \end{algorithm}

   \begin{proposition}
    \label{prop: special supersingular evaluation}
    \SpecialSupersingularEvaluation{} is correct, and can be executed in $O(\ell^2 \log^{2+\varepsilon} \ell \log^{1+ \varepsilon} p + p \log^{2+ \varepsilon} p)$, and requires $O(\ell \log p + p\log p)$ space.  
   \end{proposition}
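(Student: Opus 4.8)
The plan is to verify correctness first and then account for the running time and space step by step.

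**Correctness.** The key observation is that the algorithm is implicitly computing $\Phi_\ell(X, Y)$ by interpolation in the first variable, but only keeping the linear functional $P(Y) = \sum_{i,j} a_{i,j} x_i Y^j$ rather than the full bivariate polynomial. First I would recall the defining property of the modular polynomial: for a supersingular $j$-invariant $j_0$ corresponding to a maximal order type $\mathcal{O}_0$, the roots of $\Phi_\ell(X, j_0)$ are exactly the $j$-invariants $\ell$-isogenous to $j_0$ (with multiplicity), and under the Deuring correspondence these are the $j$-invariants attached to the right orders $\mathcal{O}_R(I_{i,k})$ of the $\ell+1$ left $\mathcal{O}_0$-ideals of norm $\ell$. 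Hence $\prod_{k=1}^{\ell+1}(Y - j_{i,k}) = \Phi_\ell(Y, j_{i,0})$ as polynomials in $\FF_p[Y]$, provided the $j_{i,0}$ really are supersingular and all lie in $\FF_p$ (which holds since $\ell > \lceil p/12\rceil + 1$ forces the number of $\ell$-isogenies from any vertex to exceed the number of supersingular $j$-invariants, and all supersingular $j$-invariants have models over $\Fps$; one must also check the $j_{i,0}$ are pairwise distinct so interpolation is well-defined — this needs $\ell+2 \le \#\mathcal{S}(p) = \Theta(p)$, which again follows from $\ell > p/12$ being compatible with the Eichler mass formula count, so I'd state the precise inequality ensuring $m \ge \ell+2$). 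Writing $\Phi_\ell(Y, j_{i,0}) = \sum_k c_{i,k-1} Y^{k-1}$, the coefficient vector $(c_{i,k-1})_k$ equals $(\sum_{s} a_{k-1, s} j_{i,0}^{\,s})_k$, so $y_i = \sum_k c_{i,k-1} x_{k-1} = \sum_{k,s} a_{k-1,s} x_{k-1} j_{i,0}^{\,s} = P(j_{i,0})$. Since $\deg_Y \Phi_\ell = \ell+1$, the target $P(Y)$ has degree $\le \ell+1$, so the $\ell+2$ interpolation points $(j_{i,0}, y_i)$ determine it uniquely; this is exactly what the final step returns. Correctness of `OrdersToJBig` and of ideal enumeration are imported from \cite{leroux2023computation} and \cite{leroux2022quaternion}.

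**Complexity.** I would break the cost into: (a) generating all maximal order types and the call to `OrdersToJBig`; (b) the loop over $i$; (c) the final interpolation. For (a): enumerating the $m = \Theta(p)$ maximal order types costs $O(p\log^{1+\varepsilon}p)$ by the mass-formula-driven enumeration, and `OrdersToJBig$(p,\frakS)$` on a set of size $S = \Theta(p)$ costs $O(S\log^{2+\varepsilon}p + p\log^{1+\varepsilon}p) = O(p\log^{2+\varepsilon}p)$ by the stated complexity of that algorithm. For (b): for each of the $\ell+2$ values of $i$, computing the $\ell+1$ ideals of norm $\ell$ and identifying the corresponding right orders costs $O(\ell\,\polylog{\ell}\polylog{p})$ quaternion operations plus lookups in $J$; forming $P(j_{i,0}, Y) = \prod_{k}(Y - j_{i,k})$ by a subproduct tree costs $O(\ell\log^{2+\varepsilon}\ell\cdot\log^{1+\varepsilon}p)$ field operations (cost of a degree-$\ell$ product over $\FF_p$ with quasi-linear polynomial arithmetic), and the single dot product $y_i = \sum c_{i,k-1}x_{k-1}$ costs $O(\ell\log^{1+\varepsilon}p)$. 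Summing over the $\ell+2$ iterations gives $O(\ell^2\log^{2+\varepsilon}\ell\,\log^{1+\varepsilon}p)$, which dominates the per-iteration quaternion work. For (c): interpolating a degree-$(\ell+1)$ polynomial from $\ell+2$ points over $\FF_p$ costs $O(\ell\log^{2+\varepsilon}\ell\,\log^{1+\varepsilon}p)$, subsumed by (b). Adding (a) and (b) yields the claimed $O(\ell^2\log^{2+\varepsilon}\ell\,\log^{1+\varepsilon}p + p\log^{2+\varepsilon}p)$.

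**Space.** The dominant storage is: the list $\frakS$ of $\Theta(p)$ order types, each $O(\log^{1+\varepsilon}p)$ bits, and the array $J$ of $\Theta(p)$ $j$-invariants in $\FF_p$, giving $O(p\log p)$; within the loop we store at most one degree-$(\ell+1)$ polynomial over $\FF_p$ and the $\ell+1$ input values $x_k$, i.e. $O(\ell\log p)$; the accumulating interpolation data is $O(\ell\log p)$. Hence total space $O(\ell\log p + p\log p)$, as claimed. I would remark that `OrdersToJBig` itself respects this bound by the analysis in \cite{leroux2023computation}.

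**Main obstacle.** The one genuinely delicate point is justifying the hypothesis $\lceil p/12\rceil + 1 < \ell$ does what we need: namely that (i) every supersingular $j$-invariant, in particular each $j_{i,0}$, has all its $\ell+1$ neighbours lying among the computed set $J$ (so the product $\prod_k(Y-j_{i,k})$ genuinely equals $\Phi_\ell(Y,j_{i,0})$ over $\FF_p$, including the subtlety that $\Phi_\ell(Y,j_{i,0})$ might have roots in $\Fps\setminus\FF_p$ — this cannot happen precisely because all supersingular $j$-invariants over $\Fpbar$ are accounted for and Galois-stability forces the coefficients into $\FF_p$), and (ii) that $m \ge \ell + 2$ so that enough distinct interpolation nodes exist. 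Both reduce to the Eichler/Deuring mass formula estimate $\#\mathcal{S}(p) = p/12 + O(1)$ together with the fact that the $\ell$-isogeny graph is $(\ell+1)$-regular; I would spell out the inequality chain carefully here, as this is where the stated bound on $\ell$ is actually consumed, and everything else is bookkeeping with previously established complexity statements.
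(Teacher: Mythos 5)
Your complexity and space analysis follows the same decomposition as the paper's own proof: enumerating all maximal order types plus the call to \OrdersToJBig{} gives the $O(p\log^{2+\varepsilon}p)$ term, and the main loop is dominated by the $\ell+2$ subproduct-tree reconstructions at $O(\ell\log^{2+\varepsilon}\ell\,\log^{1+\varepsilon}p)$ each, with identical memory bookkeeping. The paper's proof is purely a complexity count and does not address correctness at all; your argument that $y_i = P(j_{i,0})$ and that interpolation over $\ell+2$ distinct supersingular nodes recovers $P$ is a genuine, and welcome, addition.

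However, your treatment of the precondition contains a real logical error. The input specification of \cref{alg: special supersingular evaluation} reads $\lceil p/12\rceil + 1 < \ell$, and you assert that this guarantees $m \geq \ell+2$ distinct interpolation nodes (and, separately, use it to ``force'' the $j_{i,0}$ into $\FF_p$, which is an unrelated fact about supersingular curves over $\FF_{p^2}$). The inequality as written says the \emph{opposite}: since $\#\SSp = \lfloor p/12\rfloor + O(1)$, the hypothesis $\lceil p/12\rceil + 1 < \ell$ forces $m < \ell+1 < \ell+2$, so there would be too \emph{few} distinct order types to select $\O_{1,0},\dots,\O_{\ell+2,0}$ in Step 3, and Step 14's interpolation would be ill-posed. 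The inequality as printed is almost certainly a typo: the calling algorithm \cref{alg: modular evaluation big char} initializes its CRT primes at $q = 12(\ell+2)+1$ and only increases from there, so it in fact enforces $\lceil q/12\rceil \geq \ell+2$, the reverse bound. Your instinct that $m\geq\ell+2$ is the operative requirement is correct, but the claim that ``$\ell > p/12$ is compatible with the Eichler mass formula count'' is backward; a careful blind proof should have flagged the inconsistency between the stated hypothesis and the needed bound rather than asserted that the former implies the latter.
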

\begin{proof}
    All maximal orders in $\QA$ can be enumerated in $O(p \log p)$ with the techniques described for \SupersingularModularComputation{} from \cite{leroux2023computation}. 
    By \cite[Theorem 1]{leroux2023computation}, the running time of \OrdersToJBig{} is $O(p \log^{2+ \varepsilon} p)$. 
    Thus, all the steps before the main loop can be run in $O(p \log^{2+ \varepsilon} p)$.

    Computing an ideal of norm $\ell$ takes $O(\log (\ell p))$.  
    Thus, the asympotic cost of the loop is dominated by the cost of the polynomial reconstruction at the end which takes $O(\ell \log^{2+\varepsilon} \ell \log^{1+\varepsilon} p)$ with the fast interpolation algorithms.
    Since the loop is repeated $\ell$ times, the final cost is $O(\ell^2 \log^{2+\varepsilon} \ell \log^{1+ \varepsilon} p)$. 
    
    The sets $\frakS$ and $J$ use $O(p \log p)$ space to store, and the space requirement of \OrdersToJBig{} is $O(p \log p)$. 
    The amount of space for each iteration of the loop is $O(\ell \log p)$ to store the $j$-invariants and the polynomials $P(j_{i,0},Y)$. 
    After each iteration of the loop, we need to store one value in $\FF_p$, so the total is $O(\ell \log p)$.
\end{proof}

\begin{remark}
    \label{rmk: frobenius conjugacy} 
    The algorithm outlined as \cref{alg: special supersingular evaluation} is correct up to some small detail: order types correspond to $j$-invariants up to Galois conjugacy. Since we work over $\FF_{p^2}$, this means that there can be up to two distinct $j$-invariants with the same endomorphism ring ($j$ and $j^p$). In practice, this means that our algorithm needs a small modification to know which one of the two is the correct among $j_{i,k}$ and $j_{i,k}^p$. This can be done by modifying \OrdersToJBig{} to store not only couples of order types and $j$-invariants, but also ideals whose corresponding isogeny's codomain is the computed $j$-invariant. Then, we can check if the $j$\nobreakdash-invariant corresponding to $I_{i,k}$ is $j_{i,k}$ or $j_{i,k}^p$ by checking if $I_{i,k}$ is equivalent or not to the ideal stored along with the type of $\O_R(I_{i,k})$.  
\end{remark}

\begin{algorithm}[ht]
    \caption{$\ModularEvaluationBigChar{}(p,j,\ell)$}\label{alg: modular evaluation big char}
    \begin{algorithmic}[1]
    \Require {A prime $p$, $j \in \FF_p$, a prime $\ell$. }
    \Ensure { $\Phi_\ell(j,Y)$.} 
    \STATE Let $(\overline{j_i})$ be the integers in $[0,p-1]$ equal to $j^i \bmod p$ for $1 \leq i \leq \ell+1$
    \STATE Set $B = 2^{6 \ell \log \ell + 18 \ell + \log p + \log (\ell+2)}$.
    \STATE Set $\primesell{} = \lbrace \rbrace$, $P=1$, $q=12(\ell + 2) + 1$. 
    \WHILE { $P < B$ }
        \IF {$q$ is prime}
            \STATE $P \leftarrow q \cdot P$, \enspace $\primesell{} \leftarrow \primesell{} \cup \lbrace q \rbrace$
        \ENDIF
        \STATE $q \leftarrow q+2$
    \ENDWHILE
    \STATE Perform the pre-computations for the explicit CRT mod $p$ using $\primesell{}$
    \FOR {$q \in \primesell$}
    	\STATE Set $j_q$ to be the vector $(\overline{j_i})$ reduced $\bmod q$.
        \STATE $P_q(Y) \leftarrow \SpecialSupersingularEvaluation{}(q,\ell, j_q)$
        \STATE Update the CRT sums for each coefficient of $P_q(Y)$ 
    \ENDFOR 
    \STATE Perform the post-computation for the explicit CRT to obtain $P(Y) \in \FF_p[X]$
   \Return $P(Y)$
    \end{algorithmic}
   \end{algorithm}

\begin{proposition}
    \label{prop: CRT evaluation big characteristic}
    The expected running time of \ModularEvaluationBigChar{} is $ O(\ell^2 (\ell \log \ell + \log p) \log^{2+\varepsilon} (\ell \log \ell +\log p))$ and requires $O( (\ell \log \ell + \log p) \log (\ell \log \ell + \log p) + \ell \log p)$ memory. 
\end{proposition}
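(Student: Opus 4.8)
The plan is to prove \cref{prop: CRT evaluation big characteristic} by bounding (a) the number and size of the CRT primes, (b) the cost of each call to \SpecialSupersingularEvaluation{} on a small prime $q$, and (c) the cost of the explicit-CRT reconstruction, and then to combine these to get the overall time and space bounds. Throughout, write $L := \ell \log \ell + \log p$, so that the claimed running time is $O(\ell^2 L \log^{2+\varepsilon} L)$ and the claimed memory is $O(L \log L + \ell \log p)$.

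\textbf{Step 1: Counting and bounding the CRT primes.} The bound $B = 2^{6\ell\log\ell + 18\ell + \log p + \log(\ell+2)}$ is, up to the explicit constants, an upper bound for $(\ell+2)$ times the largest absolute value of a coefficient of $\Phi_\ell(X,Y)$ (using the classical height bound on modular polynomial coefficients, e.g. $\log \|\Phi_\ell\|_\infty = O(\ell \log \ell)$; the constants $6$ and $18$ are chosen to dominate the known explicit bound). Hence $\log B = O(L)$, and the product $P$ of the selected primes $q \in \primesell$ first exceeds $B$ once $\log P = \Theta(L)$. Since we only take primes $q > 12(\ell+2)$, each prime has $\log q = \Omega(\log \ell)$; by the prime number theorem the primes we collect are all of size $O(L)$ (more precisely $O(L + \log L) = O(L)$), so $\#\primesell = O(L/\log L)$, and in particular $\log q = O(L)$ for every $q \in \primesell$. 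Finding these primes by trial division from $q = 12(\ell+2)+1$ upward costs $\softO(L^{2})$, which will be absorbed into the main term.

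\textbf{Step 2: Cost of one small-prime evaluation.} Each $q \in \primesell$ satisfies $\lceil q/12 \rceil + 1 < \ell$ by construction (since $q > 12(\ell+2)$ would be the wrong direction — note that in fact $q$ ranges up to size $O(L)$, which is $\gg \ell$; so one must instead invoke the proposition in the regime where $q$ may be larger than $12\ell$, meaning the supersingular-curve count $\lceil q/12\rceil$ exceeds $\ell+2$, which is exactly the hypothesis $\lceil p/12\rceil + 1 < \ell$ read with $p \mapsto q$... ). Here is the subtle point: \SpecialSupersingularEvaluation{} as stated requires $\lceil q/12\rceil + 1 < \ell$, i.e. \emph{more} supersingular $j$-invariants over $\FF_{q}$ than the $\ell+2$ interpolation nodes we need; this is why the threshold $q = 12(\ell+2)+1$ appears. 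So each CRT prime is $\Theta(\ell)$ in size, \emph{not} $O(L)$ — wait, this conflicts with Step 1. I would resolve this by noting that the constant in $B$ forces $\log P = \Theta(\ell\log\ell + \log p)$, and since each admissible prime is $\Theta(\ell)$ we in fact need $\Theta(\ell\log\ell + \log p)/\log \ell$ of them, i.e. $\#\primesell = O(L/\log\ell)$, all of size $\Theta(\ell)$. Then by \cref{prop: special supersingular evaluation} each call costs $O(\ell^2 \log^{2+\varepsilon}\ell \cdot \log^{1+\varepsilon}\ell + \ell \log^{2+\varepsilon}\ell) = O(\ell^2 \log^{3+\varepsilon}\ell)$ time and $O(\ell\log\ell + \ell\log\ell) = O(\ell\log\ell)$ space (substituting $p \mapsto q = \Theta(\ell)$). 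Summing over all primes gives time $O(\ell^2 \log^{3+\varepsilon}\ell \cdot L/\log\ell) = O(\ell^2 L \log^{2+\varepsilon}\ell)$, which is $O(\ell^2 L \log^{2+\varepsilon} L)$ as claimed.

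\textbf{Step 3: Exponentiation over $\FF_p$, CRT reconstruction, and assembling bounds.} Computing $(\overline{j_i})_{1\le i\le \ell+1}$ costs $\ell$ multiplications in $\FF_p$, i.e. $\softO(\ell\log p)$. Reducing the vector $(\overline{j_i})$ mod each $q$ costs $\softO(\ell \log p)$ per prime, total $\softO(\ell\log p \cdot L/\log\ell)$, absorbed. The explicit CRT (à la \cite[Section 6]{sutherland2011computing}) over $\#\primesell = O(L/\log\ell)$ primes, applied coordinatewise to the $\ell+2$ coefficients of the output polynomial, costs $\softO(\ell \cdot L)$ time after an $\softO(L)$ precomputation, and crucially stores only the running CRT sums: $O(\ell)$ field elements of size $O(\log p)$ plus $O(\log B) = O(L)$ bits of auxiliary data, for $O(\ell\log p + L)$ space. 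The dominant space contribution is then $\max(O(\ell\log\ell)$ from one \SpecialSupersingularEvaluation{} call, $O(\ell\log p)$ for the output and the $(\overline{j_i})$, $O(L\log L)$ for prime-finding scratch$)$, i.e. $O(L\log L + \ell\log p)$ as claimed; note we never store more than one $P_q(Y)$ at a time, which is what makes the memory quasi-linear in $\ell$. The expected-time qualifier comes from the heuristic/randomised subroutines inside \OrdersToJBig{} (inherited through \cref{prop: special supersingular evaluation}).

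The main obstacle I anticipate is \textbf{reconciling the size of the CRT primes with the hypothesis of \cref{prop: special supersingular evaluation}}: the proposition needs each prime $q$ to be small enough that $\ell$ exceeds the supersingular count $\lceil q/12\rceil+1$, i.e. $q = O(\ell)$, yet we need the product of these primes to exceed a bound of bitlength $\Theta(\ell\log\ell+\log p)$. This forces $\Theta((\ell\log\ell+\log p)/\log\ell)$ primes and is precisely why the final time bound carries an extra factor $L/\log\ell$ beyond the cost $\ell^2\log^{3+\varepsilon}\ell$ of a single small-prime evaluation — the bookkeeping of these two competing constraints, together with verifying that the constants $6,18$ in $B$ genuinely dominate the explicit coefficient-height bound for $\Phi_\ell$, is where the care is needed; everything else is routine summation.
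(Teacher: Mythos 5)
Your proposal correctly reconstructs the overall CRT scheme and identifies that the input condition $\lceil p/12\rceil + 1 < \ell$ in \SpecialSupersingularEvaluation{} must be a typo (the algorithm needs at least $\ell+2$ maximal order types, so the condition must go the other way, i.e.\ $\lceil q/12\rceil+1 \geq \ell+2$, which is exactly what the threshold $q_0 = 12(\ell+2)+1$ in \ModularEvaluationBigChar{} enforces). However, the ``resolution'' you adopt in Step 2 — that every CRT prime is $\Theta(\ell)$ and that $\#\primesell = O(L/\log\ell)$ — is not just different from the paper's, it is arithmetically impossible. The algorithm sweeps $q$ upward from $q_0$ until the running product exceeds $B$; the starting point $q_0 = 12(\ell+2)+1$ is a \emph{lower} bound on the primes, not a fixed size. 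Since $\sum_{q \le x,\,q\text{ prime}} \log q \sim x$, the largest prime collected is $\approx q_0 + \log B = O(\ell + L) = O(L)$, and the prime count is $O(L/\log L)$ — exactly what the paper asserts, citing the same reasoning as for \ModularComputation{}. Your version cannot hold: the number of primes in any window $[c_1\ell, c_2\ell]$ is $O(\ell/\log\ell)$, and $\ell/\log\ell < \ell \le L/\log\ell$, so there simply do not exist $\Theta(L/\log\ell)$ primes of size $\Theta(\ell)$ once $L = \ell\log\ell + \log p$ exceeds $\ell\log\ell$.

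This mis-sizing propagates. In Step 2 you plug $q = \Theta(\ell)$ into \cref{prop: special supersingular evaluation} and get a per-call cost $O(\ell^2\log^{3+\varepsilon}\ell)$ and space $O(\ell\log\ell)$; in the paper's accounting, one must substitute $q$ ranging up to $O(L)$, so the worst-case per-call cost is $O(\ell^2\log^{2+\varepsilon}\ell\,\log^{1+\varepsilon}L + L\log^{2+\varepsilon}L)$ and the per-call space is $O(L\log L)$ (since the $q\log q$ term from enumerating and storing $\O(q)$ maximal order types dominates $\ell\log q$). The latter is in fact where the $O(L\log L)$ in the stated memory bound comes from — you instead attribute that term to ``prime-finding scratch,'' which is not the dominant contributor. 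Your total time estimate lands near the right order of magnitude only because $\log\ell \leq \log L$ makes the discrepancy absorbable into the $\varepsilon$; the memory accounting, as written, does not recover the $O(L\log L)$ term from the correct source. The fix is simply to use the paper's bounds $\max_{q\in\primesell} q = O(\log B) = O(L)$ and $\#\primesell = O(\log B/\llog B)$, then sum the cost and maximize the memory of \cref{prop: special supersingular evaluation} over that range of $q$, and add the $O(\ell\log p)$ for the output and CRT sums.
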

\begin{proof}
    With the same reasoning as in the proof of the expected running time of \ModularComputation{} in \cite{leroux2023computation}, 
    we can take $\# \primesell = O(\log B/\llog B)$ and $\max_{q \in \primesell} q = O(\log B)$. 
    We obtain the result by combining $\log B = O(\ell \log \ell + \log p)$ with \cref{prop: special supersingular evaluation}, and the cost of the CRT operations given in \cite{sutherland2013evaluation}. 
\end{proof}

\subsection{The supersingular case}
\label{sec: supersingular eval}

Our algorithm to evaluate the modular polynomial on the $j$-invariant $j$ of a supersingular curve is simple to describe. It mainly consists in the application of the \OrdersToJSmall{} (or \OrdersToJBig{}) algorithms from \cite{leroux2023computation} to collect the $j$-invariants required to reconstruct $\Phi_\ell(j,X)$ from its roots. However, the input of these algorithms are the maximal orders isomorphic to the endomorphism ring of the curve $\ell$-isogenous to $j$. The simplest way to compute these maximal orders is to know the endomorphism ring associated to $j$. Hence, the first building block we need is an  algorithm to compute the endomorphism ring of a supersingular curve defined over $\FF_p$. 

\vspace{1ex}

\subsubsection*{Computing the endomorphism ring of a supersingular elliptic curve over $\FF_p$.}

We introduce an algorithm \EndoRing{} of heuristic complexity $\widetilde{O}(p^{1/4})$. This algorithm relies on a subroutine that computes isogenies between two given supersingular curves over $\FF_p$, which can morally be traced back to \cite[Algorithm 1]{delfs2016computing}. In this paper, it is simply refined to be efficient in practice and allow the computation of the ideal corresponding to the computed isogeny (via the Deuring correspondence). We then simply apply this algorithm between the target curve $E$ and some starting curve $E_0$ of known endomorphism ring. 

The main property behind this algorithm is that the set of supersingular curves $E$ defined over $\FF_p$ with a given $\FF_p$\nobreakdash-endomorphism ring $R:=\End(E)\cap\QQ(\pi)$, where $\pi\colon E\to E$ is the $p$\nobreakdash-power Frobenius endomorphism, admits (very few exceptional cases exempt) a free and transitive group action by $\Cl(R)$. In the supersingular setting, the only two choices are $R=\ZZ[\pi]\cong\ZZ[\sqrt{-p}]$ and $R=\ZZ[\frac{1+\pi}2]\cong\ZZ[\frac{1+\sqrt{-p}}2]$. Thus, the set of supersingular curves over $\FF_p$ with a given $\FF_p$\nobreakdash-endomorphism ring can be obtained by applying the action of a set of elements spanning $\Cl(R)$ on any given starting curve $E_0/\FF_p$ with the correct choice of $\FF_p$\nobreakdash-endomorphism ring ($\ZZ[\pi]$ or $\ZZ[\frac{1+\pi}2]$, corresponding to the surface or floor of the $2$\nobreakdash-isogeny volcano). Heuristically, we can obtain such a set by considering the combination of small powers of $O(\log p)$ ideal classes in $\Cl(R)$. This is formalized as \cref{claim: class group span}.

This heuristic result is convenient in practice because computing a policyclic representation as in \cite[Algorithm 2.2]{sutherland2011computing} requires a number of operations linear in the class number, whereas constructing the set used in the heuristic method can be done in polylogarithmic time.  

\begin{claime} 
    \label{claim: class group span}
    Write $R\in\big\{\ZZ[\pi],\ZZ[\frac{1+\pi}2]\big\}$.
    There exists a constant $C$ such that for any prime $p$, any pair of elements $[\mathfrak{g}_1], [\mathfrak{g}_2]$ in $\Cl(R)$, any set of ideals $\frakl_1,\ldots,\frakl_n$ with pairwise distinct odd prime norms in $R$, and set of exponents $e_1,\ldots,e_n$ such that $\prod_{i=1} (4e_i +1 ) > C p^{1/2} \log p$, there exists $\fraka = \prod_{i=1}^n \frakl_i^{b_i}$ with each $b_i \in [-2e_i,2e_i]$ such that $[\mathfrak{g}_1] = [\fraka] [\mathfrak{g}_2]$ in $\Cl(R)$.  
\end{claime}

\begin{algorithm}[ht]
    \caption{$\EndoRing{}(p,j)$}\label{alg: endo ring}
    \begin{algorithmic}[1]
    \Require {A prime $p$, and the $j$-invariant of a supersingular curve over $\FF_p$, denoted $j$.}
    \Ensure{$\bot$, or maximal order $\O \in \QA$ isomorphic to $\End(E)$ where $j(E) = j$.}
    \STATE Let $\O_0$ be one of the maximal orders of $\QA$ given in \cite[Lemma 2,3,4]{KLPT14}
    \STATE Compute $E_0$ a supersingular curve defined over $\FF_p$ of known endomorphism ring whose endomorphism ring is isomorphic to $\O_0$
    \STATE Compute a set of ideals $\frakl_1,\ldots,\frakl_n$ in $\frakO$ of odd prime norm $\ell_1,\ldots,\ell_n$ in $\frakO$ and exponents $e_1,\ldots,e_n < 10$ such that $ \prod_{i=1}^n (4e_i+1) > C p^{1/2} \log p $
    \STATE Set $J_0 = \lbrace (j_0,\frakO) \rbrace$
    \STATE Set $J = \lbrace (j,\frakO) \rbrace$
    \FOR {$i=1$ to $n$} 
        \FOR { $e \in [-e_i,e_i]$ and $(j_1,\fraka_1),(j_2,\fraka_2) \in J_0 \times J$ }
            \STATE $J_0 = J_0 \cup \lbrace (\frakl_i^b \star j_1, \frakl_i^b \fraka_1) \rbrace $
            \STATE $J = J \cup \lbrace (\frakl_i^b \star j_2, \frakl_i^b \fraka_2) \rbrace$
        \ENDFOR
        \IF{ there is a collision $(j',\fraka), (j',\frakb) \in J_0 \times J$ }
            \RETURN $\O= \O_R( \O_0 \fraka \frakb^{-1} )$
        \ENDIF 
    \ENDFOR
   \Return $\bot$
   \end{algorithmic}
   \end{algorithm}
\begin{proposition}
\label{prop: endo ring}
Assuming \cref{claim: class group span}, \EndoRing{}  always returns a maximal order isomorphic to the endomorphism ring of the input curve. 
 The expected running time of \EndoRing{} is $O(p^{1/4} \log p^{3 + \varepsilon})$, and the expected memory cost is $O(p^{1/4} \log p^{2 + \varepsilon})$. 
 The size of the output is $O(\log p)$.
\end{proposition}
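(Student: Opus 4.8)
The plan is to read \EndoRing{} as a baby-step giant-step search (in the spirit of Delfs--Galbraith) inside the class group $\Cl(R)$ acting on the supersingular $\FF_p$-curves with $\FF_p$-endomorphism ring $R$, and to use \cref{claim: class group span} to guarantee that this search always succeeds within the prescribed budget. First I would fix the algebraic set-up. Let $E$ be a curve with $j(E)=j$, let $j_0=j(E_0)$, and let $R=\End(E)\cap\QQ(\pi)$ be the $\FF_p$-endomorphism ring of $E$, one of $\ZZ[\pi]$ or $\ZZ[\frac{1+\pi}{2}]$ depending on $p$ and the position of $E$ on the $2$-isogeny volcano; this position is determined by a $\poly{\log p}$ inspection of $E$, and \EndoRing{} is run with $\frakO=R$. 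The starting curve $E_0$, together with $\O_0\cong\End(E_0)$ (one of the orders of \cite[Lemma 2,3,4]{KLPT14}, adjusted by a $2$-isogeny step if necessary), is chosen on the same volcano level as $E$, so that $\End(E_0)\cap\QQ(\pi)\cong R$; this correction is asymptotically harmless. Outside the $O(1)$ exceptional $j$-invariants with extra automorphisms (dispatched separately), $\Cl(R)$ acts freely and transitively on the set of such curves, so there is a \emph{unique} class $[\fraka_0]$ with $[\fraka_0]\star j_0=j$. A short induction on the main loop then shows that after iteration $i$ the set $J_0$ consists of all pairs $(\fc\star j_0,\fc)$ with $\fc$ a product $\prod_{k\le i}\frakl_k^{b_k}$, $b_k\in[-e_k,e_k]$, and $J$ of the analogous pairs with $j_0$ replaced by $j$.

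Next I would invoke \cref{claim: class group span} with $[\mathfrak{g}_1]=[\fraka_0]$ and $[\mathfrak{g}_2]=[1]$: since Step~3 ensures $\prod_i(4e_i+1)>Cp^{1/2}\log p$, there exist exponents $b_i\in[-2e_i,2e_i]$ with $[\prod_i\frakl_i^{b_i}]=[\fraka_0]$. Writing $b_i=c_i-d_i$ with $c_i,d_i\in[-e_i,e_i]$ (always possible since $|b_i|\le 2e_i$) and setting $\fc=\prod_i\frakl_i^{c_i}$, $\fd=\prod_i\frakl_i^{d_i}$, one gets $[\fc\fd^{-1}]=[\fraka_0]$, hence $[\fd^{-1}]\star j_0=[\fc^{-1}]\star j$. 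As $\fd^{-1}=\prod_i\frakl_i^{-d_i}$ and $\fc^{-1}=\prod_i\frakl_i^{-c_i}$ have the exponent pattern describing $J_0$ and $J$, the two pairs with this common first coordinate lie in $J_0$ and $J$ by iteration $n$: the loop finds a collision and \EndoRing{} never returns $\bot$. Finally, any collision $(j',\fraka)\in J_0$, $(j',\frakb)\in J$ satisfies $\fraka\star j_0=j'=\frakb\star j$, so $[\fraka\frakb^{-1}]=[\fraka_0]$; via the explicit embedding $R\hookrightarrow\O_0$ (sending $\pi$ to the element of $\O_0$ of reduced norm $p$), the integral left $\O_0$-ideal attached to $\O_0\fraka\frakb^{-1}$ is the kernel ideal of an isogeny $E_0\to E$, so by the Deuring correspondence $\O_R(\O_0\fraka\frakb^{-1})$ is a maximal order isomorphic to $\End(E)$. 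This is the returned value, and it does not depend on the collision chosen, all candidate ideals lying in the class $[\fraka_0]$.

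For the complexity I would argue as follows. Under GRH (effective Chebotarev) there are $O(\log p)$ primes split in $R$ of size $\poly{\log p}$, from which the ideals $\frakl_i$ are extracted by a modular square root in $\poly{\log p}$ time; taking $n=O(\log p)$ of them with the bounded exponents $e_i<10$ of Step~3 makes $\#J_0=\#J=\softO(p^{1/4})$. Steps~1--2 cost $\poly{\log p}$, as does the concluding computation of $\O_R(\O_0\fraka\frakb^{-1})$ followed by an LLL reduction of its $\ZZ$-basis. The main loop performs $\softO(p^{1/4})$ evaluations of the class-group action, each a small-degree isogeny over $\FF_p$ plus an ideal multiplication at cost $O(\log^{2+\varepsilon}p)$, and a collision search by sorting at cost $\softO(p^{1/4})$; hence the expected running time is $O(p^{1/4}\log^{3+\varepsilon}p)$. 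The space is dominated by $J_0$ and $J$, which store $\softO(p^{1/4})$ pairs, each a $j$-invariant in $\FF_p$ together with a bounded exponent vector, i.e.\ $O(\log p)$ bits per pair, giving $O(p^{1/4}\log^{2+\varepsilon}p)$. Finally, a reduced $\ZZ$-basis of a maximal order in $\QA$ (which has reduced discriminant $p$) has entries polynomial in $\sqrt p$, so the output has size $O(\log p)$.

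The step I expect to be the main obstacle is the quantitative matching between the structured search set $\{\prod_i\frakl_i^{b_i}:b_i\in[-e_i,e_i]\}$ maintained by the algorithm and the covering guarantee of \cref{claim: class group span}: one must choose $n$ and the $e_i<10$ — and, under GRH, exhibit enough small split primes $\frakl_i$ — so that $\prod_i(4e_i+1)>Cp^{1/2}\log p$ holds \emph{simultaneously} with $\#J_0=\softO(p^{1/4})$, which is what makes \cref{alg: endo ring} achieve the claimed $p^{1/4}$ cost. A secondary nuisance, which I would treat as routine, is the bookkeeping that passes from the commutative $R$-ideal data carried along by the algorithm to the quaternionic kernel ideal $\O_0\fraka\frakb^{-1}$, together with pinning down the correct volcano level for $E_0$ and handling the finitely many exceptional cases.
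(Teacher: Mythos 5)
Your proposal follows the same meet-in-the-middle approach as the paper's proof, and your correctness argument (the decomposition $b_i = c_i - d_i$ with $c_i, d_i \in [-e_i, e_i]$ to match \cref{claim: class group span} against the structure of $J_0 \times J$) is in fact more explicit than what the paper writes down. However, the ``obstacle'' you flag is real, and the paper resolves it differently than you anticipate: it is \emph{not} the case that $\#J_0 = \#J = \softO(p^{1/4})$ as a structural bound. With $e_i \in \{1,\ldots,9\}$ and the constraint $\prod_i(4e_i + 1) > C p^{1/2} \log p$, the final sizes $\prod_i(2e_i+1)$ of $J_0$ and $J$ are forced to be around $p^{1/3}$ or larger, since each ratio $(4e_i+1)/(2e_i+1)$ is strictly below $2$. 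What saves the day is that the proposition claims an \emph{expected} running time, and the algorithm tests for a collision after every outer iteration $i$. Under GRH one has $h(R) = O(\sqrt{p}\,\llog p)$, and a standard birthday-paradox argument shows that the first collision between $J_0$ and $J$ is expected to occur once each set contains $O(p^{1/4}\log^{\varepsilon} p)$ elements, at which point the algorithm terminates early. The condition from \cref{claim: class group span} serves only to guarantee worst-case termination; the $p^{1/4}$ arises from the expectation, not from the size of the search space.

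A secondary issue: you price each class-group-action step at $O(\log^{2+\varepsilon} p)$, describing it as ``a small-degree isogeny over $\FF_p$,'' but the kernel points of the $\frakl_i$-action live over an extension of degree $O(\ell_i)$ with $\ell_i = O(\log^{1+\varepsilon} p)$, and V\'elu needs $O(\ell_i)$ operations over that extension. Each such operation costs $O(M_\PP(\ell_i) M_\ZZ(\log p))$, giving $O(\log^{3+\varepsilon} p)$ per step. You happen to land on the correct total $O(p^{1/4}\log^{3+\varepsilon} p)$ only because your $\softO(p^{1/4})$ absorbs the missing $\log p$ factor; the clean accounting has the extra logarithm coming from the per-step isogeny cost, with $O(p^{1/4}\log^{\varepsilon}p)$ steps.
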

\begin{proof}

    Under $GRH$, the smallest value of $q$ such that $\sqrt{-q}$ is contained in $\QA$ is in $O(\log^2 p)$
     by a result of Ankeny \cite{ankeny1952least}. Thus, with the CM method, a curve $E_0$ can be found using $\polylog{p}$ binary operations. 

    By \cref{claim: class group span} and \cite[Theorem 7]{CLMPR18}, all supersingular $j$-invariants defined over $\FF_p$ will be spanned by $\left( \prod_{i=1}^n \frakl_i^{b_i} \right) * j_0$, for $b_i \in [-2e_i,2e_i]$ with overwhelming probability. Thus, we know there will be a collision in $J$ and $J_0$. 
    
    Under GRH, if we select the norms $\ell_1,\ldots,\ell_n$ of the ideals $\frakl_1,\ldots,\frakl_n$ as the smallest primes that are split in $\ZZ{\sqrt{-n}}$, then we have $\frakl_i = O(\log p^{1+\varepsilon})$ for all $i$, and we have $n = O(\log p)$.

    The computation of isogenies of degree $\ell_i$ can be done in $O(\ell_i M_\PP(\ell_i) M_\ZZ(\log p))$, as Vélu's formulas can be computed in $O(\ell_i)$ operations over the field of definition of the kernel. The degree of the field of definition of the $\ell_i$-torsion over $\FF_p$ is $O(\ell_i)$
 (see \cite[Lemma 2]{leroux2023computation} for instance). And so arithmetic operations over that field can be performed in $O (M_\PP(\ell_i) M_\ZZ(\log p))$. 

    The kernel of the isogenies realizing the action of $\frakl_i$ can be computed as the eigenvalue of the Frobenius morphism. These points can be computed by evaluating the Frobenius on a basis of the $\ell_i$-torsion. 
    The overall cost of this operation is $O(\sqrt{\ell_i} M_\PP(\ell_i) M_\ZZ(\log p) )$. 

    With $\ell_i = O(\log p)$, we get the final bound of  $O (\log p^{3+\varepsilon})$ on the cost of each group action computation. 

    Under GRH, we have $h(\frakO) = O (\sqrt{p} \, \llog(p))$, and so, by the birthday paradox, the number of $j$-invariants that needs to be computed in the sets $J$ and $J_0$ before a collision can be found is in $O(p^{1/4} \log^{\varepsilon} p )$.
    Thus, the expected running time of \EndoRing{} is $O(p^{1/4} \log^{3+\varepsilon} p)$. 
    
    The order $\O_0$ can be computed with coefficients over $\QQ$ of size $O(\log p)$ (see \cite{KLPT14}, for instance). 
    The computation of $\O$ can be done in $O(\log p)$ as the coefficients of the ideal $\O \fraka\frakb^{-1}$ are in $O(p)$.  The maximal order $\O$ can be given by 16 coefficients over $\QQ$. Since $\O$ it is the right order of an ideal of norm in $O(p)$, the size of the coefficients is $O(\log p)$.  
    The $j$-invariants and the ideals take $O(\log p)$ to store so the total memory cost is $O(p^{1/4} \log^{1+\varepsilon} p)$. 

    The output is the right order of the ideal $\O_0 \fraka \frakb^{-1}$. This ideal has norm $O(\log p)$ and so the coefficients of its right order over the basis $1,i,j,k$ of $\QA$ can be upper-bounded by $O(\log p)$. Indeed, it can be easily verified that $\O n(\fraka \frakb) \subset \O_0 $, and so we can express a basis of $\O$ as elements of $\O_0$ divided by $n(\fraka \frakb)$. This gives the desired upper-bound on the size of $\O$.    
\end{proof}

\vspace{1ex}

\subsubsection*{The supersingular evaluation algorithm.} Computing $\Phi_\ell(j,X)$ mainly consists in computing the $j$-invariants that are $\ell$-isogenous to $j$. When $j$ is supersingular, we run this operation without any $\ell$-isogeny computation by computing the endomorphism ring of $j$ and using the Deuring correspondence to compute the $\ell$-isogenous $j$-invariants. 

\begin{algorithm}[ht]
    \caption{$\SupersingularEvaluation{}(p,j,\ell)$}\label{alg: supersingular evaluation}
    \begin{algorithmic}[1]
    \Require {A prime $p$, the $j$-invariant of a supersingular curve over $\FF_p$, a prime $\ell$. }
    \Ensure { $\Phi_\ell(j,Y)$.}
    \STATE Compute $E$ the curve whose $j$-invariant is $j$
    \STATE Compute $\O = \EndoRing{}(p,j)$
    \STATE Compute $I_1,\cdots,I_{\ell+1}$ the $\ell+1$ $\O$-ideals of norm $\ell$
    \STATE Set $\frakS = \lbrace \O_R(I_i) | i \in \lbrace 1,\ldots,\ell+1 \rbrace\rbrace$
    \STATE\label{step: modular j comput} Compute $J = \OrdersToJSmall{}(p,\frakS)$ (or use \OrdersToJBig{}$(p,\frakS)$ if this is more efficient)
   \Return $\prod_{j_i \in J} (Y-j_i)$
    \end{algorithmic}
   \end{algorithm}

   \begin{proposition}
    \label{prop: supersingular evaluation} 
    \SupersingularEvaluation{} can be executed in 
    $$O\left(\ell (\log \ell^{2 + \varepsilon} \log p^{1 + \varepsilon} + \log p^{2 + \varepsilon} ) + p \log p^{1 + \varepsilon}\right)$$
    binary operations and requires $O( (\ell + p) \log p)$ space when \OrdersToJBig{} is used in Step~\ref{step: modular j comput} (assuming GRH,\cite[Claim 1]{leroux2023computation} and \cref{claim: class group span}). When using \OrdersToJSmall{}, it can be executed in
    $$O\left(\ell (\log p^{4 + \varepsilon} + \log \ell^{2 + \varepsilon} \log^{1+ \varepsilon} p) + p^{1/4} \log^{3 + \varepsilon} p \right)$$
    binary operations, requiring $O(\ell \log p + p^{1/4} \log^{1+\varepsilon} p )$ space (under GRH, \cref{claim: class group span} and the heuristics from \cite{KLPT14}).  
\end{proposition}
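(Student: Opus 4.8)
The plan is to analyze \cref{alg: supersingular evaluation} step by step, tracking both the time and space cost of each line, and then adding them up in the two regimes (using \OrdersToJBig{} versus \OrdersToJSmall{} in Step~\ref{step: modular j comput}). First I would dispense with Steps 1--4: computing the curve $E$ from $j$ is $O(\log^{1+\varepsilon} p)$; computing $\O = \EndoRing{}(p,j)$ costs $O(p^{1/4}\log^{3+\varepsilon} p)$ time and $O(p^{1/4}\log^{1+\varepsilon} p)$ space by \cref{prop: endo ring}; enumerating the $\ell+1$ left $\O$-ideals of norm $\ell$ is a routine lattice computation costing $O(\ell \log^{1+\varepsilon}(\ell p))$ time, since each ideal is described by a basis with coefficients of size $O(\log(\ell p))$; and forming $\frakS$ as the list of right orders $\O_R(I_i)$ is the same order of cost (each right order has coefficients of size $O(\log(\ell p))$, as argued at the end of the proof of \cref{prop: endo ring}). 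So the pre-\OrdersToJ{} part costs $O(p^{1/4}\log^{3+\varepsilon} p + \ell \log^{1+\varepsilon}(\ell p))$ time and $O(\ell \log p + p^{1/4}\log^{1+\varepsilon} p)$ space.

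Next I would invoke the complexity statements for the two sub-algorithms on an input set $\frakS$ of size $S = \ell + 1$. For \OrdersToJSmall{}, the cited cost is $O(S \log^{4+\varepsilon} p)$ time, i.e. $O(\ell \log^{4+\varepsilon} p)$, with space $O(\ell \log p)$ to hold the $S$ outputs plus the $O(\log^{6+\varepsilon} p)$ (amortizable) precomputation and the per-order working space; combined with the endomorphism-ring cost this gives the second bound in the statement. For \OrdersToJBig{}, by \cite[Theorem 1]{leroux2023computation} the cost is $O(S \log^{2+\varepsilon} p + p \log^{1+\varepsilon} p)$ time and $O((S+p)\log p)$ space, which with $S = \ell+1$ gives $O(\ell \log^{2+\varepsilon} p + p\log^{1+\varepsilon} p)$ time and $O((\ell+p)\log p)$ space, and here one uses $\EndoRing{}$ with its $\widetilde O(p^{1/4})$ cost absorbed into the $p\log^{1+\varepsilon}p$ term; this is the first bound. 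Finally, the last line builds $\prod_{j_i \in J}(Y - j_i)$, which is a product of $\ell+1$ linear factors over $\Fps$, done by a subproduct tree in $O(\ell \log^{2+\varepsilon} \ell \log^{1+\varepsilon} p)$ time and $O(\ell \log p)$ space. Adding the contributions in each regime, and absorbing lower-order terms, yields exactly the two displayed complexities (noting $\log \ell^{2+\varepsilon}\log p^{1+\varepsilon}$ dominates the product-tree term when $\ell$ is not much larger than $p$, and otherwise is itself absorbed into the $\ell \log^{4+\varepsilon}p$ / $\ell\log^{2+\varepsilon}p$ terms).

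For correctness one relies on \cref{prop: endo ring} (so that $\O$ is genuinely isomorphic to $\End(E)$, assuming \cref{claim: class group span}), on the Deuring correspondence (the codomains of the $\ell+1$ distinct isogenies from $E$ of degree $\ell$ correspond exactly to the right orders of the $\ell+1$ $\O$-ideals of norm $\ell$), and on the correctness of \OrdersToJSmall{}/\OrdersToJBig{} from \cite{leroux2023computation}; then $\{j_i\}_{j_i\in J}$ is precisely the multiset of roots of $\Phi_\ell(j,Y)$, which is monic of degree $\ell+1$ in $Y$, so the returned product equals $\Phi_\ell(j,Y)$ (one should remark, as in \cref{rmk: frobenius conjugacy}, on the Frobenius-conjugacy ambiguity and how tagging $j$-invariants with a witnessing ideal resolves which of $j_i, j_i^p$ is correct). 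I expect the main obstacle to be the bookkeeping in the space analysis — in particular justifying that \OrdersToJSmall{} only needs $O(\ell\log p)$ memory for the $\ell+1$ orders despite each internal \OrderToJ{} call manipulating isogenies of degree $\widetilde O(p^3)$, which requires noting that the working space of a single \OrderToJ{} call is $\polylog{p}$ and is reused across the $\ell+1$ calls — together with correctly merging the $\EndoRing{}$ space term with the \OrdersToJ{} space term in each of the two cases.
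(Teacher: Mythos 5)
Your proposal is correct and follows essentially the same route as the paper's proof: bound the cost of $\EndoRing{}$ via \cref{prop: endo ring}, bound the ideal/right-order enumeration as a lower-order lattice computation, invoke the stated complexities of \OrdersToJSmall{}/\OrdersToJBig{} for a set of size $S=\ell+1$, and add the product-tree cost for the final reconstruction. You are in fact slightly more careful than the paper's own proof: you cite \cref{prop: endo ring} for the endomorphism-ring cost where the paper's text has a self-referential typo (it cites \cref{prop: supersingular evaluation} rather than \cref{prop: endo ring}), and you spell out why the $O(p^{1/4}\log^{3+\varepsilon}p)$ term from $\EndoRing{}$ is absorbed by $p\log^{1+\varepsilon}p$ in the \OrdersToJBig{} regime and why the per-call working space of \OrderToJ{} is reused and does not multiply by $\ell$.
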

\begin{proof}

    The correctness follows directly from the correctness of the sub-algorithms. 

    The computation of an $\O$-ideal of norm $\ell$ can be done in $O(C + \log \ell)$ where $C$ is a bound on the size of the coefficients of $\O$. Thus, by \cref{prop: endo ring}, the complexity of computing the $O(\ell)$ ideals is $O( \ell( \log \ell + \log p) )$. The computation of the rights orders has the same complexity.
    
    Then, the result follows from \cite[Theorem 1]{leroux2023computation} for \OrdersToJBig{}, the complexity of $O( \ell \log^{4 + \varepsilon} p)$ for \OrdersToJSmall{} for a set of size $O(\ell)$, \cref{prop: supersingular evaluation} for \EndoRing{} and the complexity of the algorithm to compute a polynomial from its roots by building a product tree. 

    The memory requirements follow from the same results. 
\end{proof}

\subsection{A second CRT algorithm for big level}
\label{sec: eval big level}

We obtain our second generic algorithm to evaluate modular polynomials by applying \SupersingularEvaluation{} on a set of well-chosen small primes and then reconstructing the desired result using the CRT method. The only constraint on the choice of the CRT primes is that we need to find primes $p_i$ where the reduction modulo $p_i$ of the $j$-invariant to be evaluated is supersingular.

\begin{algorithm}[ht]
    \caption{$\ModularEvaluationBigLevel{}(p,j,\ell)$}\label{alg: modular evaluation big level}
    \begin{algorithmic}[1]
    \Require {A prime $p$, $j \in \FF_p$, a prime $\ell$. }
    \Ensure { $\Phi_\ell(j,Y) \in \FF_p[X]$.} 
    \STATE Let $\overline{j}$ be the integer in $[0,p-1]$ equal to $j \mod p$
    \STATE Set $B = 2^{6 \ell \log \ell + 18 \ell + (\ell+1) \log j + \log (\ell+2)} $
      
    \STATE $\primesell(j) \leftarrow \lbrace \rbrace$, $P \leftarrow 1$
    \STATE $\Delta \leftarrow \lceil \log B \rceil$
    \STATE Compute $\primesdelta$ as the set of primes smaller than $\Delta$ with Eratosthenes sieve
    \STATE $q_\delta \leftarrow \max \primesdelta$, $n \leftarrow q_\Delta +2$
    \WHILE { $P < B$ }
        \STATE $S \leftarrow [n,n+1,\ldots,n+\Delta]$   
        \STATE Remove all multiples of the elements of $\primesdelta$ from $S$
        \FOR{$q \in S$}
            \STATE Let $j_q = \overline{j} \mod q$ and $E_q$ be the elliptic curve over $\FF_q$ of $j$-invariant equal to $j_q$
            \IF{$E_q$ is supersingular}
                \STATE $P \leftarrow q \cdot P$, \enspace $\primesell{}(j) \leftarrow \primesell{}(j) \cup \lbrace q \rbrace$
            \ENDIF
        \ENDFOR 
        \STATE $n \leftarrow n + \Delta+1$
        \IF {$q_\Delta^2  \leq n$} 
            \STATE $S = [q_\Delta+2,q_\Delta+3,\ldots, 2 q_\Delta+1]$
            \STATE Remove all multiples of the elements of $\primesdelta$ from $S$
            \STATE $\primesdelta \leftarrow \primesdelta \bigcup S$
            \STATE $q_\Delta \leftarrow \max \primesdelta$
        \ENDIF
    \ENDWHILE
    \STATE Perform the pre-computations for the explicit CRT mod $p$ using $\primesell{}$
    \FOR {$q \in \primesell$}
        \STATE $P_q(Y) \leftarrow \SupersingularEvaluation{}(q,j_q,\ell)$ 
        \STATE Update the CRT sums for each coefficient of $P_q(Y)$ 
    \ENDFOR 
    \STATE Perform the post-computation for the explicit CRT to obtain $P(Y) \in \FF_p[X]$ 
   \RETURN $P(Y)$
    \end{algorithmic}
   \end{algorithm}

\begin{proposition}
    \label{prop: modular evaluation}
    \ModularEvaluationBigLevel{} is correct and, under the Lang--Trotter Conjecture~\cite{lang2006frobenius}, \cref{claim: class group span}, and the heuristic from \cite{KLPT14}, the computation of $\primesell(j)$ can be done in
    \begin{equation*}
        O \left( \ell^2 \log^{2} j (\log^{1+\varepsilon} (\ell \log j)) \right)
    \end{equation*} 
    and the rest of the computation in 
    \begin{equation*}
        O \left( \ell^2 \log j \log^{3+\varepsilon} \ell + \ell^{3/2} \log^{3/2} j \log \ell^{3+\varepsilon} + \ell \log^{2 +\varepsilon} \log^{1+ \varepsilon} p  \right). 
    \end{equation*} 
    The memory complexity is $O( \ell \log (pj))$ for each step. 

\end{proposition}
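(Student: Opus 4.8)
The plan is to verify the four assertions in turn: correctness, the cost of assembling $\primesell(j)$, the cost of the remaining steps, and the memory bound. Throughout I will use that the hypotheses of \cref{prop: supersingular evaluation} --- \cref{claim: class group span} and the \cite{KLPT14} heuristics --- are in force, since the algorithm calls $\SupersingularEvaluation{}$ only on (tiny) supersingular inputs.

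\textbf{Correctness.} Write $\Phi_\ell(X,Y)=\sum_{a,b}c_{a,b}X^aY^b$ with $\deg_X\Phi_\ell=\ell+1$. The classical height bound for $\Phi_\ell$ (see, e.g., \cite{broker2012modular}) gives $\log\max_{a,b}|c_{a,b}|\le 6\ell\log\ell+18\ell$ for $\ell$ large, so each coefficient of $\Phi_\ell(\overline j,Y)\in\ZZ[Y]$ has absolute value at most $(\ell+2)\,2^{6\ell\log\ell+18\ell}\,j^{\ell+1}\le B/2$ for the $B$ of Step~2. By construction each $q\in\primesell(j)$ is a prime with $\overline j\bmod q$ the $j$-invariant of a supersingular curve over $\FF_q$, so by \cref{prop: supersingular evaluation} the call $\SupersingularEvaluation{}(q,j_q,\ell)$ returns $\Phi_\ell(j_q,Y)=\Phi_\ell(\overline j,Y)\bmod q$; since $P=\prod_{q\in\primesell(j)}q\ge B$ exceeds twice the coefficient bound, the explicit CRT modulo $p$ recovers $\Phi_\ell(\overline j,Y)\bmod p=\Phi_\ell(j,Y)$, exactly as in \cite[Algorithm~2]{sutherland2013evaluation} (cf.\ \cite[Section~6]{sutherland2011computing}).

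\textbf{Cost of $\primesell(j)$.} First record $\log B=O(\ell\log\ell+\ell\log j)$. Fix any elliptic curve over $\QQ$ with $j$-invariant $j$; under the Lang--Trotter conjecture \cite{lang2006frobenius} the number of primes $q\le x$ at which it has supersingular reduction --- equivalently, with $\overline j\bmod q$ a supersingular $j$-invariant --- is $\asymp\sqrt x/\log x$, so partial summation gives $\sum_{q\le x,\ \mathrm{ss}}\log q=\Theta(\sqrt x)$. Hence it suffices to run the segmented Eratosthenes sieve up to $X=O(\log^2 B)$ to collect supersingular primes of product at least $B$, which is what the while loop does. The sieve over $[1,X]$ costs $O(X\log\log X)$; the reductions $\overline j\bmod q$ for all primes $q\le X$ can be computed together by a remainder tree in $\softO(X)$ bit operations (using $\log j=O(\sqrt X)$, which follows from $\log B\ge(\ell+1)\log j$); and each of the $O(X/\log X)$ supersingularity tests over $\FF_q$ costs $O(\mathsf{poly}(\log\log B))$. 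The total is $\softO(X)=\softO(\log^2 B)=O\!\left(\ell^2\log^2 j\,\log^{1+\varepsilon}(\ell\log j)\right)$.

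\textbf{Cost of the remaining steps.} Here $\primesell(j)$ has $N=\Theta(\log B/\log\log B)$ elements, each of size $O(\log^2 B)$ and hence bit-length $O(\log\log B)$. For each $q$ the algorithm runs $\SupersingularEvaluation{}(q,j_q,\ell)$ with \OrdersToJSmall{}, at cost $O\!\left(\ell(\log^{2+\varepsilon}\ell\,\log^{1+\varepsilon}q+\log^{4+\varepsilon}q)+q^{1/4}\log^{3+\varepsilon}q\right)$ by \cref{prop: supersingular evaluation}, plus $O(\ell\log^{1+\varepsilon}p)$ to update the $\ell+2$ running CRT sums. Summing over $q\in\primesell(j)$ and using $\log q=O(\log\log B)$: the $\ell\log^{2+\varepsilon}\ell\,\log^{1+\varepsilon}q$ terms contribute $\softO(\ell\log B\,\log^{2+\varepsilon}\ell)=\softO(\ell^2\log j\,\log^{3+\varepsilon}\ell)$ (dominating the $\ell\log^{4+\varepsilon}q$ terms); the $q^{1/4}\log^{3+\varepsilon}q$ terms --- the cumulative cost of the \EndoRing{} subcalls --- contribute, by a second partial-summation estimate under Lang--Trotter, $\softO\!\big(\sum_{q\le X,\ \mathrm{ss}}q^{1/4}\big)=\softO(X^{3/4})=\softO(\log^{3/2}B)=\softO(\ell^{3/2}\log^{3/2}j\,\log^{3+\varepsilon}\ell)$; and the CRT-update terms contribute $\softO(\ell\log B\,\log^{1+\varepsilon}p)$, which, together with the one-time explicit-CRT pre-/post-computations of cost $\softO(\log B)$ \cite{sutherland2013evaluation}, accounts for the term in $\log p$. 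Collecting the three contributions gives the stated bound.

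\textbf{Memory, and the main obstacle.} At all times the stored data is: the output together with the running CRT sums, $O(\ell\log p)$; the integer $\overline j$, $O(\log j)$; the list $\primesell(j)$ with the active sieve segment and $\primesdelta$, all $O(\log B)$; and the workspace of a single $\SupersingularEvaluation{}$ call, which is $\softO(\ell+\sqrt{\log B})$ by \cref{prop: supersingular evaluation}. As in \cite[Algorithm~2]{sutherland2013evaluation}, the explicit CRT is organised so that the $N$ per-prime constants are recomputed on the fly rather than stored simultaneously, so the total remains $O(\ell\log(pj))$. I expect the two heuristic estimates under Lang--Trotter --- the bound $X=\softO(\log^2 B)$ on the sieving range and the partial-summation bound on $\sum_{q\in\primesell(j)}q^{1/4}$ --- to be the delicate point, since together they pin down both the quadratic-in-$\log j$ cost of the prime collection and the $\ell^{3/2}$ term; keeping the explicit-CRT bookkeeping memory-optimal (no simultaneous storage of the per-prime CRT constants) is a secondary subtlety.
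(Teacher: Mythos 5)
Your proof follows the same strategy as the paper's: invoke Lang--Trotter to bound the density of primes at which $j$ reduces to a supersingular invariant, deduce a sieving range $X = O(\log^2 B)$, cost out the segmented sieve with on-the-fly supersingularity tests, and then charge each surviving prime $q$ at the rate given by \cref{prop: supersingular evaluation} with \OrdersToJSmall{}. Your added touches --- the explicit height-bound verification of correctness and the partial-summation estimate for $\sum_{q}q^{1/4}$ (the paper simply multiplies the worst-case per-call cost by the number of primes, which gives the same order because the largest primes dominate) --- are nice but do not change the structure of the argument.

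One small caveat deserves flagging: you propose computing all reductions $\overline{j}\bmod q$ via a remainder tree over every prime $q\le X$ at once, in $\softO(X)$ bit operations. A remainder tree over all such primes would require $\softO(X)=\softO(\ell^2\log^2 j)$ storage, which exceeds the claimed memory budget of $O(\ell\log(pj))$. The algorithm as written computes $j_q$ one prime at a time inside the loop, at $O(M(\log j))$ per prime, and this naive approach already fits within the stated time bound (the total $\softO(\log j\cdot\log B)$ is dominated by the $\softO(\log^2 B)$ sieving cost), so the remainder tree refinement is unnecessary and, as stated, would undercut your own memory analysis. If you want to keep it, batch the remainder tree over sieve segments of length $\Delta=O(\sqrt{X})$ to stay within $\softO(\log B)$ memory.
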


\begin{proof}
    According to the Lang--Trotter conjecture, there are $O(\sqrt{x}/\log x)$ primes $q$ smaller than $x$ such that the reduction of any given $j \in \QQ$ modulo $q$ is supersingular. 
    To ensure that their product is bigger than $B$, we can take $x = O(\log^2 B)$, and there will be $O(\log B/\llog B)$ different primes.  
    
    With $\log B = O(\ell \log j) $, we obtain $\# \primesell{}(j) = O( \ell \log j / \log (\ell \log j))$ and so $\max_{q \in \primesell{}(j)} q = O( \ell^{2} \log j^{2})$. The set $\primesell(j)$ takes $O(\ell \log j)$ memory to store. 
    
    The algorithm we propose to use to compute $\primesell(j)$ is a simple variation of the segmented sieve of Bays and Hudson \cite{bays1977segmented} that can enumerate through all primes smaller than a given $x$ in $O(x \llog x)$ complexity while using $O(\sqrt{x})$ space when $\Delta = O(\sqrt{x})$, and combine it with supersingularity tests on the fly (to avoid storing a list of all the primes smaller than $x$). Supersingularity testing over $\FF_q$ can be performed in $O(\log^{2+\varepsilon} q)$ \cite{sutherland2012identifying}, and it is only performed on prime numbers. Thus, we end up with a total complexity of $O \left( \ell^2 \log^{2} j (\log^{1+\varepsilon} (\ell \log j)) \right)$ and a space complexity of $O(\ell \log j)$ to compute $\primesell(j)$.

    For each $q$, by \cref{prop: special supersingular evaluation} and the fact that $q = O(\ell^2 \log j)$, the complexity of the execution of \SupersingularEvaluation{} with \OrdersToJSmall{} (which will be faster than using \OrdersToJSmall because $p = 
    \Theta(\ell^2 \log^2 j)$) is $$O\left(  \ell (\log^{4 + \varepsilon} (\ell \log j) + \log^{2 + \varepsilon} \ell \log^{1+ \varepsilon} (\ell \log j)) + \ell^{1/2} \log j^{1/2} \log (\ell \log j)^{3 + \varepsilon} \right).$$
    We deduce that the global cost of all the executions of \SupersingularEvaluation{} is
    \begin{equation*}
        O \left(\ell^2 \log j \log^{3+\varepsilon} (\ell \log j)  + \ell^{3/2} \log^{3/2} j \log^{2+\varepsilon} (\ell \log p) \right).
    \end{equation*}
    Finally, the cost of the CRT computation is $O(\log^{2 +\varepsilon} \log^{1+ \varepsilon} p)$ as shown in \cite{broker2012modular}.
    This concludes the proof of the final complexity result. 

    The prime search can be done with $O(\ell \log j)$ memory (to store the set $\primesell$). 
    Each execution of \SupersingularEvaluation{} requires a memory of $$O(\ell \log( \ell \log j ) ) + \ell^{1/2} \log^{1/2} j \log^{1+\varepsilon} \log (\ell \log j).$$  
    The only thing to store between each CRT prime computation are the CRT data whose size is $O(\ell \log p)$. 
    This proves the result on the memory requirement. 
\end{proof}

\begin{remark}
    We note that unlike most other CRT algorithms, in the generic case where $j = \Theta(p)$, the asymptotically dominant step of \ModularEvaluationBigLevel{} is the selection of the primes. 
\end{remark}

\vspace{1ex}

\subsubsection*{Sharing the computation of the CRT primes.}
The set of primes $\primesell(j)$ only depends on the value of $j$. Thus, if we want to evaluate modular polynomials of different levels at the same $j$-invariant (like in the SEA algorithm), the cost of the selection of the primes can be done once and for all for the biggest levels, and then the computation for each level can simply use a subset of these primes. 
This means that for enough levels, the bottleneck will not necessarily be the computation of the primes anymore. 
In particular, in point counting for elliptic curves over a finite field of prime characteristic of bitsize $n$ where one needs to evaluate $j$ on $O(n)$ modular polynomials of level $O(n)$, the cost of the prime selection is amortized over the different levels.

\subsection{Comparison between the existing methods}
\label{sec: comparison}

The best algorithms from the literature are Sutherland's algorithms from \cite{sutherland2013evaluation} and Robert's algorithm from \cite{robert2022some}. We are going to compare our new algorithms with them. 
To establish what is the best asymptotic algorithm, we fix a value of $\ell$ and vary the value of $p$ (and $j$ when it is relevant). We will see that the best algorithm will change as $p$ grows. 

In \cite{sutherland2013evaluation}, Sutherland introduces three different CRT algorithms to compute modular polynomials. The main one has a complexity of $$O\left(\ell^2 (\ell \log \ell + \log p) \log^{2+\varepsilon} (\ell \log \ell +\log p)\right),$$ with a $O(\ell \log p + \ell^2 \log ( \ell \log \ell + \log p ))$ space requirement. The complexity is essentially the same as the cost of computing the entire modular polynomial with a CRT algorithm, but the memory requirement is smaller. Despite this, the space complexity is quadratic in $\ell$, which is not optimal since the size of the output is $O(\ell \log p)$. 
The second algorithm due to Sutherland achieves a $O(\ell^{3+\varepsilon} \log p)$ complexity with an optimal space requirement.
The third is a hybrid version of the first two and has a complexity of $O(\ell^3 \log^{6+\varepsilon} \ell)$. It also requires an optimal amount of space. 

More recently in \cite{robert2022some}, Robert briefly outlined an algorithm with time complexity equal to $O(\ell^2 \log^{\alpha + \varepsilon} \ell \log p)$ where $\alpha$ is at best equal to $1 + 2u$ (according to \cite[Proposition 3.1]{robert2022some}) for $u \in \lbrace 1,2,4 \rbrace$ such that $\ell$ minus a product of small primes powers is equal to a sum of $u$ squares. Heuristically, by adjusting the product of small primes, one can expect to find a case where $u=2$. Thus, at best, the complexity of Robert's algorithm should be $O(\ell^2 \log^{5+\varepsilon} \ell \log p )$. However, note that Robert's paper only gives an outline of the algorithm and of its complexity analysis. Without clear statements, it is hard to assess the real complexity of the algorithm. 
Indeed, from the statements in \cite{robert2022some}, it is not completely clear to us if his asymptotic estimate can be used in the generic case. If not, then the correct value of $\alpha$ is actually $3+2u = 7$.    
The space complexity of Robert's algorithm is not clearly stated either, but it does not seem to be optimal. 

Thus, for small values of $p$ or $j$, our algorithm \ModularEvaluationBigLevel{} appears to have the best known asymptotic complexity. In particular, if $j=O(p^{\varepsilon})$ for $\varepsilon < 1$, then our algorithm has the best complexity of all existing algorithms.

If $j = \Theta(p)$, as $p$ grows, the quadratic factor in $\log j$ in our complexity will mean that \ModularEvaluationBigLevel{} will be outperformed by Robert's algorithm. The asymptotic complexity tells us that the breaking point should occur when $\log p \approx \log \ell^{1+2u}$. 
Although, note that Robert's algorithm probably requires more memory, therefore there may be some practical case where our algorithm would still outperform Robert's. Also note that if we remove the cost of the CRT prime computation (that can be shared among the computation for several levels as already argued), then we expect our algorithm to outperform Robert's for a much larger range of primes (up to $ \log p \approx \ell $).  

When $\ell = O(\log p)$, the best complexity will be obtained by Sutherland's first CRT algorithm and our \ModularEvaluationBigChar{} algorithm, but \ModularEvaluationBigChar{} has a better space complexity than Sutherland's algorithm (which is quadratic in $\ell$). 



Note that the proof of the complexity of Robert's algorithm does not require any assumptions, whereas our method and Sutherland's are only heuristic.   

In \cref{sec: implementation results}, we will compare the practical performance of our C++ implementation with the one from Sutherland.

\subsection{Other modular functions}
\label{sec: other modular}

Modular polynomials can be generalized to modular functions other than the $j$-function. Considering other kinds of modular polynomials is a well-known trick to make computations more efficient (see for instance \cite{broker2012modular}). Indeed, these alternate modular polynomials may have smaller coefficients. One of the most interesting examples appears to be the modular polynomial associated to the Weber function $\mathfrak{f}$. Over $\CC(j)$, this function is a root of the polynomial
\begin{equation}
    \label{eq: minimal weber polynomial}
    \Psi(X,j) = (X^{24} - 16)^3 - j X^{24}
\end{equation} 
and the logarithmic height bound of the associated modular polynomial $ \Phi_\ell^\mathfrak{\ell}$ is expected to be 72 times smaller than the one of $\Phi_\ell$. Moreover, the coefficient of $X^a Y^b$ is non-zero if and only if $\ell a + b = \ell+1 \mod 24$. 
These two facts imply that the complexity of computing $\Phi_\ell^\mathfrak{f}$ should be roughly $72 \times 42 = 1728$ faster than $\Phi_\ell$. By using the simple algebraic relation between $j$ and $\mathfrak{f}$ given by \cref{eq: minimal weber polynomial}, it quickly becomes more efficient to use $\Phi_\ell^\mathfrak{f}$ rather than $\Phi_\ell$.   

Over $\FF_q$, every $j$-invariant is associated to several $\mathfrak{f}$-invariants that are the roots of $\Psi(X,j)$ given in \cref{eq: minimal weber polynomial}. In fact, this polynomial comes from the covering map of $X(1) \cong \PP^1$ by $X_H$, a modular curve of level $48$ isomorphic to the map labelled 48.72.0.d.1 in the (Beta version of the) LMFDB \cite{lmfdb}.  The polynomial
$\Phi_\ell^\mathfrak{f}$ can be constructed from $\mathfrak{f}$-invariants in a manner analogous to $\Phi_\ell$ from $j$-invariants. When $\ell$ is coprime to $48$, it makes sense to talk about $\ell$-isogenous $\mathfrak{f}$-invariants, and evaluating $\Phi_\ell^\mathfrak{f}(f_0,X)$ can be done by computing all the $\ell$-isogenous $\mathfrak{f}$-invariants. 

This idea was described in BLS \cite{broker2012modular}, and Leroux already stated in \cite{leroux2023computation} that the BLS method could be extended to this setting analogously. The goal of this section is to explain how this can be realized concretely. We find that the BLS method cannot be directly applied, and instead new ideas are needed. The main obstacle of using $\frakf$-invariants instead of $j$-invariants is the multiplicity of the polynomial $\Psi$, which implies that there are several $\frakf$-invariants corresponding to the same $j$\nobreakdash-invariant (in the worst case, 72). Therefore, to find the $\ell
$-isogenous $\frakf$-invariants, the usual method of projecting from $X_H$ down to $X(1)$, finding an $\ell$-isogenous $j$\nobreakdash-invariant, and then lifting again to the correct $\frakf$-invariant does not work. Indeed, it is hard to know which lift of $j$ to consider. 

\vspace{1ex}

\subsubsection*{The method used in \cite{broker2012modular}.} Bröker, Lauter, and Sutherland overcome this obstacle by working purely over $X_H$. Indeed, by using the class group action of a quadratic imaginary order of conductor $\ell$ on $X(1)$, they enumerate the set of neighbours in the $\ell$-isogeny graph. 
This action can then be extended to $X_H$. More explicitly, by using the modular polynomial of level $\ell_i$, one can compute the action of ideals of small norm $\ell_i$. When working over $X_H$, we can simply use the modular polynomial $\Phi^\frakf_{\ell_i}$ instead, and the roots will directly give the $\ell_i$-isogenous $\frakf$-invariants. We furthermore remark that this method works as the BLS method only needs to consider points of $X_H(\FF_p)$.

To adapt this idea to our setting of supersingular curves defined over $\FF_{p^2}$, the main problem is that the required $j$-invariants are not computed with the help of modular polynomials, but rather as the codomain of certain isogenies. As there are no efficient isogeny formul\ae{} for isogenies of degree $\ell$ between elements of $X_H(\FF_{p^2})$, we cannot directly derive the $\ell$-isogenous $\frakf$-invariants. Our idea is to use a different interpretation of the curve $X_H$. Indeed, modular curves are known to parametrize elliptic curves enriched with level structure. Thus, if we have an explicit way to associate the elements of $X_H$ with curves and associated level structure of order $48$ (meaning that we can compute the value of the $\frakf$-invariant only from the curve and the level structure), then it suffices to push the level structure through the isogenies of degree $\ell$ to be able to recover the $\ell$-isogenous $\frakf$-invariant. This will work for $\ell$ coprime to $48$, hence for every prime $\geq 5$. 

\vspace{1ex}

\subsubsection*{The parametrization associated to $X_H$.}  The level of $X_H$ is $48 = 3 \times 16$, so we instead consider the level structure of order $3$ and $16$ separately. 

Let us first consider the level $3$ part. 
For this, we look at \cref{eq: minimal weber polynomial} and observe that if $f$ is a root of $\Psi(X,j)$ for a given $j$, then $f^8$ is a root of $\Psi'(X,j^{1/3}) = (X^3 - 16) - Xj^{1/3}$. This is convenient because the $j$-function is well-known to be the cube of another modular function of level $3$, often denoted by $\gamma_2$. In terms of modular curves, this can be interpreted as the cover map 
\begin{align*}
    X_{ns}^+ (3) \rightarrow X(1), \ t \mapsto t^3.
\end{align*}
There is a classical formula to compute the three possible $\gamma_2$-invariants above a given $j$-invariant of a curve $E$ and the $x$-coordinates of $E[3]$ (see for instance in \cite[Section 6.6, Example 1]{brokerthesis}). If $E$ is the curve $y^2 = x^3 + Ax +B$ and $x_1,x_2,x_3,x_4$ are the $x$-coordinates of the non-trivial points of $E[3]$, then the roots of $X^3 - j(E)$ are given by 
\[
    \frac{-48 A}{2A - 3 (x_1 x_2 + x_3 x_4)},
    \quad
    \frac{-48 A}{2A - 3 (x_1 x_3 + x_2 x_4)},
    \quad
    \frac{-48 A}{2A - 3 (x_1 x_4 + x_3 x_2)}
    .
\]
In summary, given an ordering for the $x$-coordinates of the non-trivial points of $E[3]$, we can compute the three possible $\gamma_2$-invariants associated to $E$.
This covers the part of level $3$. 

We now consider the level $16$ part. Here, we look for the solutions of $\Psi''(X,j) = (X^8 - 16)^3 - j X^8$, which will correspond to $\frakf^3$. 
The map $t \mapsto \frac{(t^8-16)^3}{t^8}$ corresponds to the cover of $X(1)$ by the modular curve $X_{H'}$ of level 16 labelled as 16.24.0.p.1 in the new beta version of LMFDB. 
To the best of our knowledge, there are no known formulas to recover the $\frakf^3$-invariant from a level structure of order $16$. Such formulas certainly exist, but it is unclear to us how can they be computed efficiently. Instead, we propose to use the much better studied modular curve $X_s(16)$ above $X_{H'}$. In particular, $X_s(16)$ parametrizes curves together with two subgroups of order $16$ which intersect trivially. In this way, given a curve $E(\FF_q)$ and two subgroups $G_1,G_2 \subset E[16](\FF_q)$, we can recover the corresponding element of $X_s(16)(\FF_q)$ by looking at the common preimage of $j(E)$ through the two maps $X_s(16) \rightarrow X(1)$ factoring through the two covers 
\begin{align*}
    \psi_1 \colon X_s(16) &\rightarrow X_0(16), \quad E,G_1,G_2 \mapsto E,G_1, \\ 
    \psi_2 \colon X_s(16) &\rightarrow X_0(16), \quad E,G_1,G_2 \mapsto E,G_2;
\end{align*}
see \cref{fig: modular covers}. 

\begin{remark}
    \label{rmk: weber problem}
The method that we outlined above works most of the time, but there are some problems when there exist 16-isogenies between the same pair of curves (as the map $X_0(16) \rightarrow X(1) \times X(1)$ is not injective anymore and so our GCD polynomial will have several roots). In that case, it is still possible to identify the element of $X_0(16)$ corresponding to the given subgroup of order 16 by decomposing the 16-isogeny associated to this subgroup as $4$ isogenies of degree $2$, finding the $4$ $X_0(2)$ points corresponding to each of these $2$-isogenies, and finally identifying the correct root of the GCD with the different existing maps from $X_0(16)$ to $X_0(2)$.    
\end{remark}

\begin{figure}[ht!]
    \begin{center}
    \begin{tikzpicture}
        
        \node (Xs16) at (2,2) {$X_s(16)$};
        \node (X0161) at (1,1) {$X_0(16)$};
        \node (X0162) at (3,1) {$X_0(16)$};
        \node (X1) at (2,-1) {$X(1)$};
        \node (XH) at (-1,0.75) {$X_{H}'$};
        \draw [->] (Xs16) -- (X0161);
        \draw [->] (Xs16) -- (X0162);
        \draw [<-] (X1) -- (X0161);
        \draw [<-] (X1) -- (X0162); 
        \draw [->] (Xs16) to [out=180,in=60] (XH);
        \draw [->] (XH) to [out=270,in=180] (X1);
    \end{tikzpicture}
    \caption{Modular covers of $X(1)$ by $X_s(16)$ \label{fig: modular covers}}
\end{center}
\end{figure}
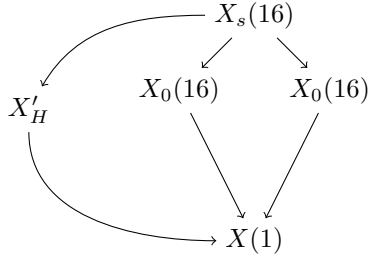

We do this as follows:
\begin{enumerate}
    \item Compute the points of $X_0(16)$ lying above the two pairs $(j, j_1)$ and $(j, j_2)$ of $j$-invariants, i.e., for $k = 1,2$, compute $x_k$ such that $\rho(x_k) = (j, j_k)$ where $\rho : X_0(16) \rightarrow X(1) \times X(1)$ is given by 
    \[ \rho(X) \colonequals \left( \frac{G_0(16)(X)}{H_0(16)(X)}, \frac{I_0(16)(X)}{J_0(16)(X)} \right),\]
    for $G_0(16)(X), H_0(16)(X), I_0(16)(X), J_0(16)(X)\in \ZZ[X]$. If it fails because there are multiple solutions, use the method outlined in \cref{rmk: weber problem}
    \item Recover the point $(x,y)$ on $X_s(16)$ above the two points of $X_0(16)$ by finding a solution to the system of equations consisting
    of
    \[
        \mbox{}\qquad\qquad\qquad
        F_s(16)(X,Y) = 0
        \qquad
        \frac{G_s(16)(X)}{H_s(16)(X,Y)} = (x_1)
        \qquad
        \frac{I_s(16)(X)}{J_s(16)(X,Y)} = (x_2),
    \]
    for $F_s, G_s, H_s, I_s, J_s \in \ZZ[X,Y]$. Where $F$ is the equation of $X_s(16)$, and $\frac{G_s}{H_s}, \frac{I_s}{J_s}$ give the two projection maps $X_s(16) \rightarrow X_0(16).$

    \item Project this point $(x,y)$ on $X_s(16)$ to $X_H'$ via $\phi\colon X_s(16) \rightarrow X'_H$, defined as 
        \[
            \mbox{}\qquad\qquad
            \phi(X,Y) \colonequals \frac{-16X^6Y^{12} - X^6Y^4 + 64X^2Y^{16} + 20X^2Y^8 + X^2}{ 32Y^{15} + 4Y^7}.
        \]
        Then, $\phi(x,y)= \frakf^3$.
\end{enumerate}
For convenience, we recall the formulas for all the maps involved in the computation described above in \cref{appendix:mod-curve-formulae}. 
Some of these maps can be found on the LMFDB, and the other maps were computed using MAGMA \cite{magma}. 

To develop an efficient algorithm that follows the procedure above, we use the method detailed in \cite[\S 5]{splitsearcher}. We first define polynomials
\begin{align*}
    f_1(X,Y) &\colonequals F_s(16)(X,Y), \\ 
    f_2(X,Y) &\colonequals G_s(16)(X,Y) - \alpha_1(j, j_1)H_s(16)(X,Y), \\
    f_3(X,Y) &\colonequals I_s(16)(X,Y) - \alpha_2(j, j_2)J_s(16)(X,Y),
\end{align*}
where, for $k = 1,2$, $\alpha_k \in \ZZ[j, j_k]$ is defined as 
\begin{align*}
    \alpha_k(j, j_k) \colonequals \gcd \bigl(G_0(16)(X) - j H_0(16)(X), I_0(16)(X) - j_k J_0(16)(X)\bigr).
\end{align*}
For $i,j \in \{1,2,3\}$, we also define polynomials 
$$R_{i,j}(Y) \colonequals {\rm{res}}_{X}(f_i(X,Y), f_j(X,Y)) \in \ZZ[j, j_{1}, j_{2}][Y].$$
By the elimination property of the resultant (e.g., see~\cite[\S 3.6, Lemma 1]{MR2290010}), the specialisations $(R_{i,j})_{[E, G_1, G_2]}(Y)$ given by evaluating the coefficients of each $R_{i,j}$ at $j \colonequals j(E)$, $j_1 \colonequals j(E/G_1) $ and $j_2 \colonequals j(E/G_2)$, vanish at the $Y$-coordinate of any common solution to the specialised polynomials $(f_j)_{[E, G_1, G_2]}(X,Y)$.

However, these resultants (generically) have factors which correspond to spurious (unwanted) solutions. Therefore, we instead consider polynomials where these spurious solutions have been removed, namely:
\begin{align*} 
    P_{1,2}(Y) &\colonequals \frac{16}{Y^{108}(16Y^8+1)^7}\cdot R_{1,2}(Y), \text{ and } \\ 
    P_{2,3}(Y) &\colonequals \frac{16}{Y^{77}(16Y^8+1)^7}\cdot R_{2,3}(Y).
\end{align*}
Note that now $P_{1,2}$ and $P_{2,3}$ are coprime. 

If there exist $x,y \in \Fp$ such that $(f_i)_{[E, G_1, G_2]}(x,y) = 0$ for each $i = 1,2,3$ then the degree of $$g(Y) \colonequals \gcd\bigl( (P_{1,2})_{[E, G_1, G_2]}(Y), (P_{2,3})_{[E, G_1, G_2]}(Y)\bigr)$$
is $1$. 
Conversely, if $y \in \Fp$ is a root of $g(Y)$, then there exist $x,x' \in \Fp$ such that 
\begin{align*} 
    (f_1)_{[E, G_1, G_2]}(x,y) &= (f_2)_{[E, G_1, G_2]}(x,y) = 0, \text{ and } \\
    (f_2)_{[E, G_1, G_2]}(x',y) &= (f_3)_{[E, G_1, G_2]}(x',y) = 0.
\end{align*}
We assume $x = x'$ (otherwise, we throw an error and restart the procedure; note this will only happen with negligible probability). 

Therefore, from a $\gcd$ computation, we can extract $y \in \Fp$, and then recover $x$ via a square root computation. Then, $(x,y)$ is a point on $X_s(16)$, and we have $\frakf^3 = \phi(x,y)$, where $\phi : X_s(16) \rightarrow X_H'$. 

In practice, rather than computing the polynomials $P_{1,2}$ and $P_{2,3}$ on the fly, we precompute and store them. Then, given $E, G_1, G_2$ defined over $\Fpbar$, we evaluate the polynomials at corresponding $j, j_1, j_2$, reduce them modulo $p$, and compute their $\gcd$. 
We summarise the discussion above in \cref{alg:getweber}. Here,
\textsf{Roots} denotes a function that returns the roots of a polynomial over a finite field.

\begin{algorithm}[ht]
    \caption{$\GetWeberCube{}(p, j, j_1, j_2)$}\label{alg:getweber}
    \begin{algorithmic}[1]
    \Require{A prime $p$, and $j$-invariants $j, j_1, j_2$ of $E, E/G_1, E/G_2$ respectively.}
    \Ensure{The cube $\frakf^3$ of the $\frakf$-invariant associated to $E$.}
    \STATE Evaluate the coefficients of $P_{1,2}(Y)$ at $j,j_1, j_2$ and reduce modulo $p$ to obtain $h_1(Y) \in \Fp[Y]$
    \STATE Evaluate the coefficients of $P_{2,3}$ at $j,j_1, j_2$ and reduce modulo $p$ to obtain $h_2(Y) \in \Fp[Y]$
    \STATE $c_0Y + c_1 \gets \gcd(h_1, h_2)$
    \STATE $y \gets -c_1\cdot c_0^{-1}$
    \STATE ${\tt rts} \gets \textsf{Roots}(f_1(X,y))$
    \FOR{$r$ in {\tt rts}}
        \IF{$f_3(r,y) = 0$}
            \RETURN $\phi(r,y)$ 
        \ENDIF
    \ENDFOR
    \RETURN $\bot$
    \end{algorithmic}
\end{algorithm}


In conclusion, an ordering $x_1,x_2,x_3,x_4$ of the $x$-coordinates of $E[3]$ and two subgroups of order $16$ with trivial intersection in $E[16]$, we obtain two polynomials in $X^8$ and $X^3$. It then suffices to take the $\gcd$ to find the value of the Weber invariant associated to this level structure of order $48$ on $E$: if $t$ is the output of $\GetWeberCube{}(p, j, j_1, j_2)$, we compute
\[ \gcd(X^3 - t, X^8 - \gamma_2)\]
and obtain $\frakf$-invariant associated to $E$. 

To find the $\ell$-isogenous $\frakf$-invariants corresponding to some $\ell$-isogeny $\varphi$ it suffices to evaluate the level structure in $\varphi$ and repeat the procedure depicted above. Since our algorithms relies on computing alternative paths through the Deuring correspondence (in particular, we never explicitly compute any $\ell$-isogenies), obtaining this evaluation requires the use of techniques which at this point are standard in isogeny-based cryptography. We refer to Section \ref{sec: implementation details} for details.

\section{Implementation results}
\label{sec: implementation}

We have implemented the algorithms \ModularEvaluationBigLevel{} and \ModularEvaluationBigChar{} in C++/NTL. 
The aim of this section is to give an overview of the code-base, before we present our implementation results.

\subsection{Implementation details}
\label{sec: implementation details}

We first describe the different parts of our library, which is available at:
\begin{center}
    {\smaller
    \url{https://github.com/tonioecto/modular-polynomial-computation-and-evaluation-from-supersingular-curves}
    }
\end{center}

\vspace{1ex}

\subsubsection*{NTL functionalities.} Our library uses NTL to perform basic arithmetic over finite fields $\FF_{p^{k}}$ (for $k \geq 1$), and can be compiled both using classes {\tt NTL::zz\_p} and {\tt NTL::zz\_pE} or {\tt NTL::ZZ\_p} and {\tt NTL::ZZ\_pE} if needed, integer arithmetic using class {\tt NTL::ZZ}, and polynomial arithmetic using classes {\tt NTL::zz\_pX}, {\tt NTL::zz\_pEX} or {\tt NTL::ZZ\_pX}, {\tt NTL::ZZ\_pEX} if needed. We furthermore use NTL vectors and matrices constructed from these classes.

\vspace{1ex}

\subsubsection*{Finding CRT primes.}
We precomputed a list of all primes up to $2^{25}$,
and use a segmented version of the sieve of Eratosthenes \cite{bays1977segmented}
to find larger primes if required (with an upper bound of $2^{50}$).
Based on the implementation of \cite{CMN21}, it takes an interval $[L,R]$
and finds all contained primes.
Sieving different intervals is independent, so the segmented sieve
parallelizes perfectly.

Instead of storing all primes and testing supersingularity of the
corresponding elliptic curves later, 
we run supersingularity tests on the fly after identifying all
primes in a sieving interval $[L,R]$.
Since the overwhelming majority of curves we test is ordinary,
our implementation benefits most from a test that discards
those curves quickly.
Following \cite{banegas2022efficient}, the best choice in our context is
Sutherland's supersingularity test based on determining whether
the 2-isogeny graph has a volcano structure (ordinary case)
or not (supersingular case) \cite{sutherland2012identifying}.
We implement Sutherland's test including the optimisations
proposed in \cite{banegas2022efficient}.

\vspace{1ex}

\subsubsection*{Field arithmetic.} We require arithmetic in $\FF_{p^{2k}}$ for small $k$. For all basic operations, we rely on NTL. Further, we precompute a matrix corresponding to the Frobenius action. Additionally, due to how the isogeny computations work, we also need a fixed, effective embedding $\iota_k : \FF_{p^2} \hookrightarrow \FF_{p^{2k}}$ for all $k$ used. This embedding is given by a fixed image $\iota_k(\omega)$, where $\omega$ is a fixed generator of $\FF_{p^2}$ of trace $0$.

For an element $a + b\omega \in \FF_{p^2}$ (represented in NTL as the tuple $(a,b)$), we can easily lift this to $\FF_{p^{2k}}$ by computing $a + \iota_k(\omega)b$ in $\FF_{p^{2k}}$. To coerce elements into $\FF_{p^2}$, we proceed as follows. Given an element $\alpha \in \FF_{p^{2k}}$ (represented in NTL as the tuple $(a_1, \dots, a_{2k})$), which we know is in the image of $\iota_k$, i.e., $\alpha = \iota_k(a + \omega b)$: 
\begin{enumerate}
    \item Recover $a$ from the trace as $a = \frac{\tr{\alpha}}{2k}$.
    \item Recover $b$ as $a_j/\iota_k(\omega)_j$, where $j > 1$ is any non-zero index of the representation $(\iota_k(\omega)_1, \dots, \iota_k(\omega)_{2k})$ of the element $\iota_k(\omega)$.
\end{enumerate}
We extended these lifting and coersion operations to the polynomial rings $\FF_{p^{2k}}[X]$, by lifting and coercing each coefficient, respectively.

\vspace{1ex}

\subsubsection*{Elliptic curves and isogenies.} We implement standard elliptic curve operations using Weierstraß curves. One of the main computational tasks required by the elliptic curve part of our library is the computation of smooth degree isogenies given by kernel generators of smooth order defined over small extensions $\FF_{p^{2k}}$ of~$\FF_{p^{2}}$. This setting is similar to that in \cite{eriksen2023deuring}, and thus the strategy employed is the same. Namely, we use supersingular curves $E$ with $\pi_E^2 = [-p]$, where $\pi_E$ denotes the $p$\nobreakdash-Frobenius endomorphism on $E$. As a result, all the isogenies will be defined over $\FF_{p^{2}}$, even if the kernel generators are not. Due to this, we can compute our isogenies by recovering the kernel polynomial using \cite[Algorithm 4]{eriksen2023deuring}, before employing Kohel's formula to recover the isogeny \cite[Chapter 2.4]{K96}.

Computing the endomorphism ring of a given supersingular elliptic curve~$E$ defined over~$\FF_p$
is done by brute-force searching an isogeny to a curve with known endomorphism ring.
Concretely,
we start by determining the $\FF_p$\nobreakdash-endomorphism
ring~$R$ of~$E$; the two cases $\ZZ[\pi]\cong\ZZ[\sqrt{-p}]$
and $\ZZ[\frac{1+\pi}2]\cong\ZZ[\frac{1+\sqrt{-p}}2]$
are readily distinguished by testing whether the rank
of $E(\FF_p)[2]$ equals $1$ or $3$ respectively.
This boils down to checking whether the cubic polynomial~$f(x)$
in the short Weierstraß equation $y^2=f(x)$ defining~$E$ 
splits over~$\FF_p$ or not.
Then, using a suitable starting curve~$E_0$
with the same $\FF_p$\nobreakdash-endomorphism ring
and known $\FF_{p^2}$-endomorphism ring,
there must exist an ideal class $[\fa]\in\Cl(R)$
such that $[\fa]\ast E_0=E$,
which we recover using a straightforward meet-in-the-middle
brute-force search using a (heuristic) generating set of small
generators for $\Cl(R)$. In practice, this means we do not set explicitly the constant $C$ from \cref{alg: endo ring}, we just increase the exponents until a solution is found.  

\vspace{1ex}

\subsubsection*{Lattices in quaternion algebras.} 
Our library identifies the quaternion algebra $B=H(-q,-p)$
simply by the pair $p,q\in\ZZ$, and the format
of its elements consists of a reference to $B$
together with a $5$\nobreakdash-tuple
of integers $t,x,y,z,d\in\ZZ$
representing the quaternion $(t+xi+yj+zk)/d$.
Arithmetic in this representation is straightforward.
We further represent a quaternion lattice~$I$
by a basis matrix in $\QQ^{4\times 4}$,
which is in turn represented as a matrix in $\ZZ^{4\times 4}$
together with a denominator in~$\ZZ$.
All basic functionality for quaternion lattices,
such as computing the intersection, product, or sum of two lattices,
computing left or right orders, or finding canonical or reduced lattice bases,
all essentially rely on the linear-algebra routines provided by NTL.

Our library also includes an implementation of a variant of the KLPT algorithm \cite{KLPT14}, which works for all primes where there exists an embedding $K \hookrightarrow B_{p, \infty}$, where $K$ is an imaginary quadratic field of class number $1$. Note that this is a very mild assumption on the prime $p$ (a quick heuristic estimate is that only $1$ in every $2^9$ primes fail this requirement). Our implementation uses the standard improvement due to Petit and Smith \cite{petit2018improvement} to reduce the output size. However, our implementation also differs in another way.
The output size (with the improvement from \cite{petit2018improvement}) of KLPT is typically stated as being in $\tilde{O}(p^3)$. However, this is based on the heuristic that the shortest prime-normed equivalent ideal is of norm $O(p^{1/2}\log(p))$. While this heuristic almost never fails in large characteristic, we are working in small enough characteristic to regularly encounter ideals where this fails. Hence, we change the size of the solution we search for on the fly to be in $\tilde{O}(p^2N_I^2)$, where $N_I$ denotes the norm of the shortest prime-normed equivalent ideal.\footnote{Note that we do have the guarantee that $N_I \in \tilde{O}(p)$, hence the size of the output size is still upper bounded by $\tilde{O}(p^4)$.}

\vspace{1ex}

\subsubsection*{Orders to j-invariants.}
In \OrdersToJSmall, we repeatedly apply the same procedure as in \cite{eriksen2023deuring} to translate orders to $j$-invariants. The only difference is that our implementation has precomputed elliptic curves $E/\QQ$ with CM by an order of class number $1$, together with the isogeny $\iota$ defining the complex multiplication. Thus, for the first step described in \cite{eriksen2023deuring}, we can take the starting curve $E_0/\FF_{p}$, to be a supersingular reduction of one of our precomputed curves. Note that the requirement on $p$ that one of the precomputed curves has supersingular reduction coincides with the requirement we described for KLPT (except for a constant number of primes $p$, where the precomputed curves might have bad reduction). Further, note that for $p \equiv 1 \pmod{12}$, the isomorphism $\O_0 \cong \End(E_0)$ cannot be determined solely by the (reduction of the) endomorphism $\iota$; see \cite[Section~3.1]{eriksen2023deuring} for details, and how to resolve this ambiguity.

\ModularEvaluationBigChar{} also requires the \OrdersToJBig{} algorithm introduced in \cite{leroux2023computation} to compute efficiently all the supersingular $j$-invariants and the corresponding maximal order. Our implementation follows quite closely the algorithm detailed by Leroux (see \cite[Algorithm 1]{leroux2023computation}). Maximal orders are represented by the 3 successive minima (that can be computed with LLL) of the trace\nobreakdash-0 sublattice.  

\vspace{1ex}

\subsubsection*{Computing Weber invariants.} To compute the Weber invariant of a curve with a given level structure, we implement the ideas discussed in \cref{sec: other modular}. In particular, we implement subroutines to evaluate 
multivariate polynomials of degree $2$ and $3$ (NTL only supports univariate polynomials), and 
we use built-in NTL functions for $\gcd$ computations and root finding. 
Furthermore, the algorithm requires computing the $\gcd$ of two precomputed resultants, which are stored in {\tt getresultants.cpp}. 
The memory cost of the hardcoded resultants is negligible in the context of the full algorithm, and the efficiency gain is significant.

To obtain the $\ell$-isogenous Weber invariants in \ModularEvaluationBigLevel{}, we also need to evaluate the level structure at the $\ell$-isogenies. Since no $\ell$-isogenies are ever explicitly computed, this is done by evaluating the level structure on a well-chosen endomorphism, before pushing it through the alternative isogeny path we have computed. Explicitly, this endomorphism equals the composition of the $\ell$\nobreakdash-isogeny with the dual of the alternative path, and can be found by multiplying the corresponding ideals.

Furthermore, \OrdersToJBig{} needs to be adapted to work with Weber invariants instead of $j$-invariants. For that, we represent Weber invariants as a $j$\nobreakdash-invariant together with some basis of the $48$-torsion, and with some coefficients to represent the level structure in the basis. By exploiting the symmetries coming from the Weber function, we obtain an efficient algorithm to recover the full list of Weber invariants.

\OrdersToJBig{} also needs to be adapted to work with Weber invariants instead of $j$-invariants, and so we need to modify \OrdersToJBig{} to compute all supersingular $\frakf$-invariants as well. For that, we will use our level structure parametrization of $\frakf$-invariants. That way, by using a basis of the 48-torsion, each Weber invariant can be fully derived from a set of small integers (giving the coefficients of the points of the level structure in the basis). Then, it suffices to propagate the 48-torsion basis throughout the isogenies involved in \OrdersToJBig{}.
For each supersingular $j$-invariant, we will then need to compute all $\frakf$-invariants above it from the basis.  
The symmetries coming from the Weber function allow us to be much more efficient to recover the full list of Weber invariants than one application of the algorithm we described in \cref{sec: other modular} for each of the 72 Weber invariants. 

\vspace{1ex}

\subsubsection*{The CRT method.} For the implementation of the CRT method in our library, we follow Sutherland's software {\tt classpoly} \cite{classpoly} based on \cite{enge2010class,sutherland2011computing}. In particular, our CRT functions in {\tt crt.cpp} take heavy inspiration from {\tt crt.c} in Sutherland's software. The main difference is that our algorithms use C++/NTL rather than C/GMP. We therefore defer a detailed explanation on the implementation of the CRT method to these previous works.

\vspace{1ex}

\subsubsection*{Polynomial interpolation.} In our library, we implement univariate polynomial interpolation following Algorithm 10.11 in \cite[\S 10]{moderncomputeralg}. In particular, we implement two main functions:
\begin{itemize}
    \item An algorithm to interpolate a univariate polynomial $f(X) \in \FF_{p^{2k}}[X]$ of degree $d$ at $d+1$ points $(x_0, f(x_0)), \dots, (x_d, f(x_d))$.
    \item An algorithm to interpolate a univariate polynomial $f(X) \in \FF_{p^{2k}}[X]$ of degree $d$ from its roots $(x_1,\dots, x_d)$. This is a special case of the first algorithm where $f(x_i) = 0$, but here we optimise for this case.
\end{itemize}
We remark that these algorithms outperform the in-built NTL interpolation function {\tt interpolate} and the NTL function {\tt BuildFromRoots}, respectively, for polynomials of relatively large degrees (which is our use case in this paper). 

\subsection{Results}
\label{sec: implementation results}

In this section, we present the performance of our implementation of the algorithms 
\ModularEvaluationBigLevel{} and \ModularEvaluationBigChar{}, and compare it to the 
state-of-the-art \cite{sutherland2013evaluation}. 

\vspace{1ex}

\subsubsection*{Performance analysis.}

All code was compiled and
run on a
 Linux server with
 two AMD~EPYC~9754 CPUs
 running at 2.25\,GHz
 (with dynamic frequency scaling and simultaneous multithreading disabled),
 1\,TB RAM,
 and
 using \texttt{g++} version~15.1.1
with flags \texttt{-std=c++17 -O3 -march=native -DNDEBUG}.

We ran experiments for the Weber variant of both algorithms \ModularEvaluationBigLevel{} and \ModularEvaluationBigChar{}
(i.e., we compute the Weber modular polynomial evaluated at a Weber invariant).
To understand the performance of these algorithms for 
varying level $\ell$, we fix the characteristic $p=2^{31}-1$, the Weber invariant $w=2$, 
and run both algorithms for the same set of growing levels~$\ell$. 
Since our implementation is heavily multithreaded,
to enable a meaningful comparison with data collected on other hardware,
we report the CPU \emph{core} time (i.e., in single-core equivalents)
rather than the wall-clock time consumed by each run of each algorithm.
Our results are summarized in \cref{tab:results}.

We see that despite a better asympotic complexity in $\ell$, we were not able to reach a point where \ModularEvaluationBigLevel{} is faster than \ModularEvaluationBigChar{}. This is the due to the fact that the hidden constants in the asymptotic complexity of the \OrderToJName{} algorithm are very large (due to the computation of a smooth isogeny of degree $\approx p^3$). 
Nonetheless, the current data is enough to witness that the scaling of \ModularEvaluationBigLevel{} and \ModularEvaluationBigChar{} are indeed quadratic and cubic in $\ell$ respectively.  


\begin{table}[ht!]
    \caption{Experimental data:\,
    CPU core time consumed by runs of \ModularEvaluationBigLevel{}
    and \ModularEvaluationBigChar{} for various levels $\ell$, 
    fixing the characteristic $p = 2^{31}-1$ and the input Weber invariant $w = 2$.}\label{tab:results}
    \centering 
    \renewcommand{\tabcolsep}{0.15cm}
    \renewcommand{\arraystretch}{1.2}
    \vspace{-1ex}
    \begin{tabular}{@{\;}r@{\quad}c@{\quad}c@{\quad}c@{\;}}
        \toprule
        Level $\ell$ & \ModularEvaluationBigLevel{} & \ModularEvaluationBigChar{} & Ratio \\
        \midrule
      211 &    7.68\,min    &  18.191\,s      &  25.3 \\
      419 &   24.01\,min    &  40.702\,s      &  35.4 \\
      607 &   51.34\,min    &    1.24\,min    &  41.3 \\
      811 &    1.46\,h      &    2.00\,min    &  43.7 \\
     1019 &    2.28\,h      &    3.07\,min    &  44.5 \\
     2003 &    9.12\,h      &   15.13\,min    &  36.2 \\
     3011 &   21.54\,h      &   46.53\,min    &  27.8 \\
     4003 &    1.63\,d      &    1.65\,h      &  23.7 \\
     5003 &    2.66\,d      &    3.31\,h      &  19.3 \\
     6007 &    3.89\,d      &    5.49\,h      &  17.0 \\
     7019 &    5.32\,d      &    8.42\,h      &  15.2 \\
     8011 &    7.00\,d      &   12.13\,h      &  13.8 \\
     9007 &    9.13\,d      &   17.93\,h      &  12.2 \\
    10007 &   11.39\,d      &    1.00\,d      &  11.3 \\
    11681 &   15.78\,d      &    1.56\,d      &  10.1 \\
        \bottomrule
    \end{tabular}
\end{table}

\begin{figure}
    \includegraphics[width=.99\textwidth]{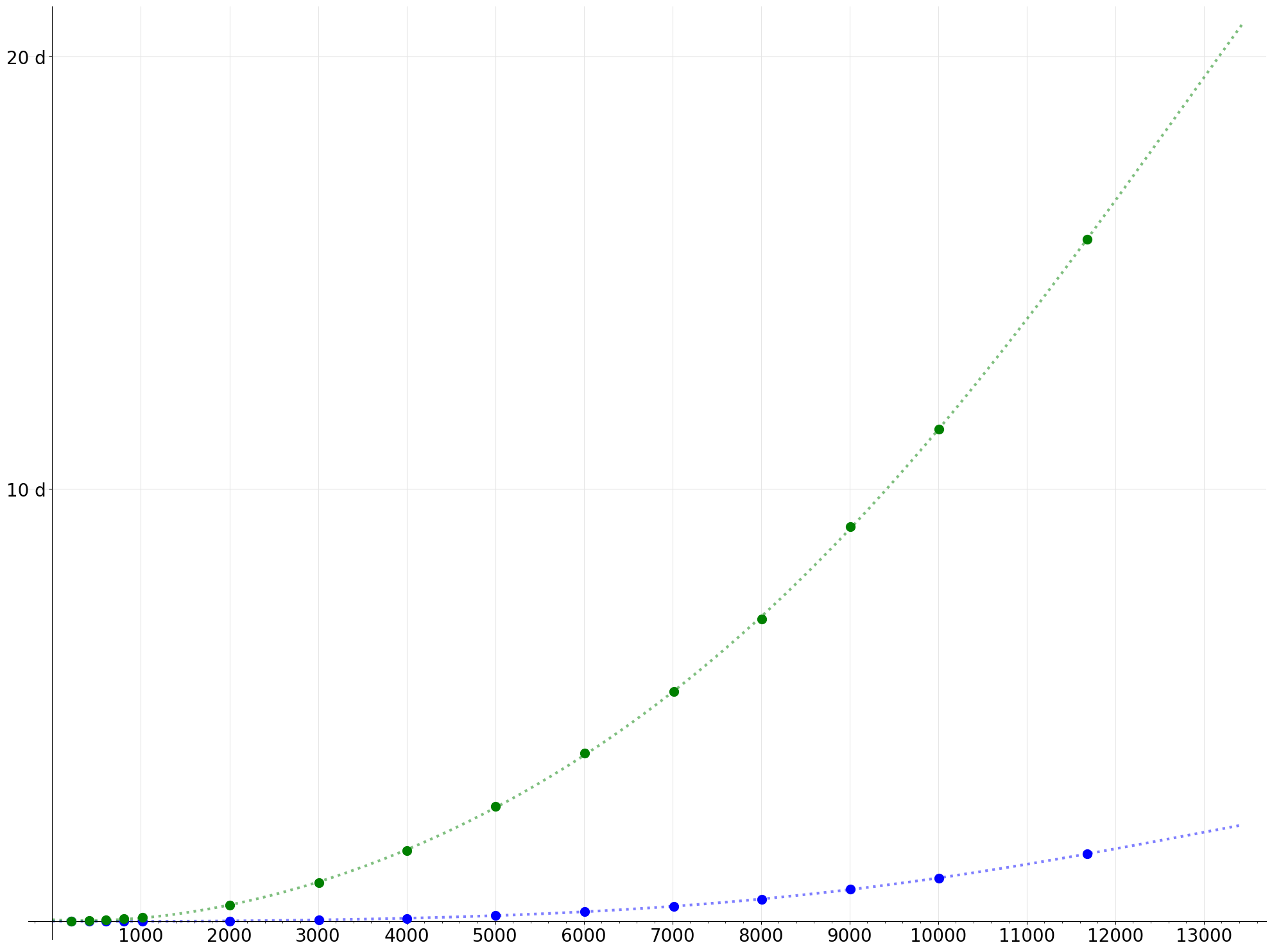}
    \caption{%
        \smaller
        Visualization of the data from \cref{tab:results}:
        The horizontal axis ranges through choices of~$\ell$,
        the vertical axis gives the corresponding runtime.
        Data points depicted in green are runs of \ModularEvaluationBigLevel{},
        whereas blue refers to \ModularEvaluationBigChar{}.
        \\
        The thin dotted lines are the result of fitting the given data points
        using SageMath's \texttt{find\_fit()} function
        with a polynomial of degree~$\leq 5$:
        Indeed, the resulting interpolation curves for
        \ModularEvaluationBigLevel{}
        and
        \ModularEvaluationBigChar{}
        are approximately quadratic and approximately cubic
        in~$\ell$,
        respectively,
        matching precisely their asymptotic runtime bounds.
        %
    }
    \label{fig:plot}
\end{figure}

 Extrapolating the results presented in \cref{tab:results},
we observe that our current implementation falls short of
a record computation (using a similar computational power as in \cite{sutherland2013evaluation}). Take, for instance, the large characteristic setting. In the SEA point-counting record described in \cite[Section 5]{sutherland2013evaluation} with $\log(p)\approx 16646$, the total time to compute evaluated modular polynomials using Weber invariants for level $\ell = 11681$ is reported to be around 2 CPU hours on a single thread. 

For the same level $\ell$, but a much smaller characteristic $p$, our implementation of \ModularEvaluationBigChar{} took around 40 CPU hours. The dependency on $\log p$ in the asymptotic complexity comes from the $\log p$ term in the height bound. When taking $\log(p)\approx 16646$ for $\ell = 11681$, one sees that the height bound is essentially doubled (compared to $\log p \approx 30$).
Thus, we can estimate that the same computation as Sutherland would have taken around $80$ CPU hours with our implementation, which is 1 or 2 orders of magnitude slower. Since the computations were run on different hardware, it is difficult to be more precise than that.  

\vspace{1ex}

\subsubsection*{Possible improvements.} There are still some places where significant improvements could be made. We identify the following possible improvements to our software (in order of expected impact):
\begin{enumerate}
    \item Currently, the quaternion operations are the bottleneck in \ModularEvaluationBigChar{} (taking up to 80\% percent of the global run time) despite being asymptotically negligible compared to interpolation. One explanation is that the hidden constants in these operations can be quite big (one quaternion multiplication is already 17 integer multiplications), and most operations on orders and ideals require to apply operations on bases of four quaternion elements. Another reason is that our implementation uses {\tt NTL::ZZ} which are integers of arbitrary size when the integers involved in all our quaternion computation can in fact be bounded. Thus, switching to fixed-sized integers should bring a noticeable improvement. Moreover, some of the operations on ideals use generic algorithms that could be improved with a tailored implementation. 
    \item Using $x$-only arithmetic for elliptic curve operations, and in particular in isogeny computations which are by far the bottleneck in \ModularEvaluationBigLevel{} due to the high constants involved in the complexity of computing the isogenies in \OrdersToJSmall.  
    \item Precomputing the action of each basis element of the endomorphism ring on the relevant $\ell^e$-torsion bases of the starting curve would also greatly improve the cost of computing the isogeny kernels in \OrdersToJSmall{} and thus positively impact the performances of \ModularEvaluationBigLevel{}.
\end{enumerate}

We are hopeful that the concrete
efficiency gains from these improvements could allow to significantly improve our implementation to the point where 
\ModularEvaluationBigChar{} could match or even beat the performances of the software from \cite{sutherland2013evaluation}. 

Depsite the better asymptotic scaling, it seems unlikely that the improvements listed above would be enough for \ModularEvaluationBigLevel{} to beat the other two algorithms for a realistic value of $\ell$. To reach that point, one would likely need a deeper algorithmic improvement allowing us to decrease the degree of the isogenies used in \OrderToJName.  

We remark, however, that all the changes listed above require a significant implementation effort, and 
are thus out of scope for this paper.

\section{Conclusion}

We have introduced several new algorithms to evaluate modular polynomials using supersingular curves and the Deuring correspondence. These algorithms all improve the asymptotic time or space complexity of previously known algorithms for some range of inputs. 

We implemented our algorithms in C++, exhibiting the practicality of the algorithms presented. We further suggest 
potential improvements that could lead to important practical speed-ups of our implementation, but they require an extensive optimization effort and are thus left for future work.


\bibliographystyle{amsplain}
\bibliography{biblio}

\appendix
\section{Maps used for Weber invariant computation}\label{appendix:mod-curve-formulae}

We recall the formulas for all the maps involved in the computation described in \cref{sec: other modular}. 
Some of these maps can be found on the LMFDB, and the other maps were computed using MAGMA \cite{magma}. 

\bigskip

\noindent
\resizebox{.99\textwidth}{!}{%
\begin{minipage}{1.26\textwidth}%
\begin{equation*}
    \begin{aligned}
         G_0(16)(X) \colonequals \, & X^{24} + 768X^{23} + 213504X^{22} + 25587712X^{21} + 1285091328X^{20} + \\  
    & 37065719808X^{19} + 714316447744X^{18} + 9993958981632X^{17} + 
    106811460943872X^{16} + \\
    & 901713888280576X^{15} + 6152208865296384X^{14} + 
    34465746750799872X^{13} + \\ 
    & 160256972254347264X^{12} +  622699889175822336X^{11} + 2028993128165277696X^{10} + \\ 
    & 5546125766502121472X^9 + 12682896313210109952X^8 +
    24114242729778610176X^7 + \\
    & 37724965228622381056X^6 +  
    47787695646028333056X^5 + 47841738841556779008X^4 + \\ 
    &36461142583191535616X^3 + 19887895954468110336X^2 + 
    6917529027641081856X  + \\ 
    & 1152921504606846976 \\
    \allowdisplaybreaks
    H_0 (16)(X) \colonequals \, & X^{23} + 22X^{22} + 208X^{21} 
    + 1104X^{20} + 3584X^{19} + 7168X^{18} + 8192X^{17} + 4096X^{16} \\
    \allowdisplaybreaks
    I_0(16)(X) \colonequals \, & X^{24} + 48X^{23} + 1104X^{22} + 16192X^{21} + 169968X^{20} + 1358208X^{19} + \\
    & 8577664X^{18} + 43863552X^{17} + 184560432X^{16} + 645654016X^{15} + \\
    & 1889647104X^{14} + 4639475712X^{13} + 9552059904X^{12} + 16434548736X^{11} + \\
    & 23468814336X^{10} + 27518124032X^9 + 26077016832X^8 + 19519451136X^7 + \\
    & 11161194496X^6 + 4633214976X^5 + 1285091328X^4 + \\ 
    & 204701696X^3 + 13664256X^2 + 393216X + 4096  \\
    J_0(16)(X) \colonequals \, & X^8 + 16X^7 + 112X^6 + 448X^5 + 
    1104X^4 + 1664X^3 + 1408X^2 + 512X \\
    \allowdisplaybreaks
    G_s(16)(X,Y) \colonequals \, & -16X^7Y^{11} - X^7Y^3 + 16X^6Y^{10} + X^6Y^2 - 16X^5Y^9 
    - X^5Y + 16X^4Y^8 + X^4 \\ 
    H_s(16)(X,Y) \colonequals \, & 4Y^{12} \\
    \allowdisplaybreaks
    I_s(16)(X,Y) \colonequals \, & -65536X^7Y^{32} + 24576X^7Y^{28} - 12288X^7Y^{24} + 
    3200X^7Y^{20} - 736X^7Y^{16} + \\
    & 120X^7Y^{12} - 14X^7Y^8 + 
    X^7Y^4 + 65536X^6Y^{32} - 16384X^6Y^{28} + 9216X^6Y^{24} - \\ 
    & 1792X^6Y^{20} + 384X^6Y^{16} - 48X^6Y^{12} + 4X^6Y^8 - 
    65536X^5Y^{32} + 8192X^5Y^{28} - \\ 
    &7168X^5Y^{24} +  768X^5Y^{20} - 192X^5Y^{16} + 16X^5Y^{12} + 65536X^4Y^{32} + 6144X^4Y^{24} + \\
    & 128X^4Y^{16} - 65536X^3Y^{32} + 
    57344X^3Y^{28} - 30720X^3Y^{24} +  11648X^3Y^{20} - \\ 
    & 3328X^3Y^{16} + 728X^3Y^{12} - 120X^3Y^8 + 14X^3Y^4 - 
    X^3 + 65536X^2Y^{32} - 49152X^2Y^{28} \\
    & + 23552X^2Y^{24} - 7936X^2Y^{20} + 1984X^2Y^{16} - 368X^2Y^{12} + 48X^2Y^8 - \\
    & 4X^2Y^4 - 65536XY^{32} + 40960XY^{28} - 
     17408XY^{24} + 5120XY^{20} - 1088XY^{16} + \\
     & 160XY^{12} - 16XY^8 + 
    65536Y^{32} - 32768Y^{28} + 12288Y^{24} - 3072Y^{20} + 512Y^{16} - 
    64Y^{12} \\ 
    \allowdisplaybreaks
    J_s(16)(X,Y) \colonequals \, & 8Y^{16} + Y^8 \\
    \allowdisplaybreaks
    F_s(16)(X,Y) \colonequals \, & 6X^8Y^{12}+X^8Y^4-16X^4Y^8-16Y^{12}-X^4.
    \end{aligned}
\end{equation*}%
\end{minipage}%
}

\vfill\pagebreak    

\end{document}